\title{On the $K$-theory of truncated polynomial rings in non-commuting variables}
\author{Vigleik Angeltveit}
\address{Department of Mathematics \\
John Dedman Building (Building 27) \\
Australian National University \\
Acton, ACT \\
0200 Australia}
\newtheorem{theorem}{Theorem}[section]
\newtheorem{thm}[theorem]{Theorem}
\newtheorem{lemma}[theorem]{Lemma}
\newtheorem{cor}[theorem]{Corollary}
\newtheorem{prop}[theorem]{Proposition}
\theoremstyle{definition}
\newtheorem{defn}[theorem]{Definition}
\let\c@equation\c@theorem
\numberwithin{equation}{section}
\newtheorem{lettertheorem}{Theorem}
  \newcommand{\bC}{\mathbb{C}}           \newcommand{\bN}{\mathbb{N}}  
\newcommand{\bQ}{\mathbb{Q}} \newcommand{\bR}{\mathbb{R}}     \newcommand{\bW}{\mathbb{W}}   \newcommand{\bZ}{\mathbb{Z}}
\newcommand{\sma}{\wedge} 
\newcommand{\holim}{\textnormal{holim}}
\newcommand{\coker}{\textnormal{coker}}
\newcommand{\xto}{\xrightarrow}
\newcommand{\TF}{\textnormal{TF}}
\newcommand{\TC}{\textnormal{TC}}
\newcommand{\trc}{\textnormal{trc}}
\begin{document}

\begin{abstract}
We compute the algebraic $K$-theory of the non-commutative ring $k\langle x_1,\ldots,x_n \rangle/(m^a)$ when $k$ is a perfect field of positive characteristic and $m=(x_1,\ldots,x_n)$. We express the answer in terms of the \emph{truncation poset Witt vectors} developed in \cite{An_genWitt}.
\end{abstract}

\maketitle

\section{Introduction}
Let $A = k \langle x_1,\ldots,x_n \rangle/(m^a)$ for some ring $k$, where $k\langle x_1,\ldots,x_n \rangle$ denotes the polynomial ring in $n$ non-commuting variables and $m=(x_1,\ldots,x_n)$ is the kernel of the map to $k$ given by evaluating at $(0,\ldots,0)$.

Hesselholt and Madsen computed the algebraic $K$-theory of $A$ when $k$ is a perfect field of positive characteristic and $n = 1$ in terms of big Witt vectors in \cite{HeMa97b}, but see \cite{HeMa97a} as well. They found that
\[
 K_{2q-1}(k[x]/(x^a), (x)) \cong \coker \big( V_a : \bW_q(k) \to \bW_{aq}(k) \big),
\]
while
\[
 K_{2q}(k[x]/(x^a), (x)) = 0.
\]
Here $\bW_n(k)$ denotes the Witt vectors on the truncation set $\{1,2,\ldots,n\}$.

This was generalized in \cite{AGHL14} to the ring $A=k[x_1,\ldots,x_n]/(x_1^{a_1},\ldots,x_n^{a_n})$, with the big Witt vectors replaced by certain generalized Witt vectors built from truncation sets in $\bN^n$ instead of $\bN$, and the cokernel of $V_a$ replaced by the iterated homotopy cofiber of an $n$-cube of spectra.

These calculations both use the \emph{cyclotomic trace map} from algebraic $K$-theory to topological cyclic homology. The underlying reason for the appearance of Witt vectors is that if $k$ is a perfect field of positive characteristic then
\[
 \lim_{R,m \leq n} \pi_* THH(k)^{C_m} \cong \bW_n(k)[\mu_0],
\]
where $\mu_0$ is a polynomial generator in degree $2$.

Lindenstrauss and McCarthy \cite{LiMc} have recently computed the algebraic $K$-theory of $A$ in the case $a=2$ using different techniques. Note that when $a=2$ the ring $A$ is commutative and can equally well be defined as $k[x_1,\ldots,x_n]/(m^2)$.

In this paper we compute the algebraic $K$-theory of $A = k \langle x_1,\ldots,x_n \rangle/(m^a)$ for any $a$, under the assumption that $k$ is a perfect field of positive characteristic. The answer is most easily expressed using the language truncation poset Witt vectors from \cite{An_genWitt}. To give a very brief summary, if $S$ is a partially ordered set with a function $|-| : S \to \bN$ satisfying some mild conditions, we say $S$ is a \emph{truncation poset}. We can then define the Witt vectors $\bW_S(k)$ in a similar way as for ordinary truncation sets. Maps of truncation posets encode structure maps that generalize the classical structure maps on Witt vectors.

To state the main result we need two definitions.

\begin{defn} \label{d:ourtrunsets}
Fix positive integers $n$, $a$ and $N$. (We allow $N=\infty$.) Let $S_n(a,N)$ denote the following generalized truncation set. As a set, $S_n(a,N)$ is the set of words in $x_1,\ldots,x_n$ of length at most $N$ with length divisible by $a$, modulo the equivalence relation $\sim_a$ given by cyclically permuting blocks of $a$ letters. The map $|-| : S_n(a,N) \to \bN$ is given by defining $|w|$ to be the largest $e$ such that $w=(w')^e$ for some $w' \in S_n(a,N)$.
\end{defn}

\begin{defn} \label{d:ourVmap}
If $b \mid a$, let
\[
 v_a^b : S_n(a,N) \to S_n(b,N)
\]
be the map of generalized truncation sets that sends $[w]_{\sim_a}$ to $[w]_{\sim_b}$, and let
\[
 V_a^b : \bW_{S_n(a,N)}(k) \to \bW_{S_n(b,N)}(k)
\]
be the corresponding additive map of Witt vectors. We will refer to $V_a^b$ as a generalized Verschiebung map.
\end{defn}

See Section \ref{s:genWitt} for a quick definition of $V_a^b$, or \cite{An_genWitt} for a more thorough discussion.

\begin{lettertheorem} \label{t:main}
Suppose $k$ is a perfect field of positive characteristic. The algebraic $K$-theory of $A = k\langle x_1,\ldots,x_n \rangle/m^a$ relative to $k$ is given by the exact sequence
\[
 0 \to K_{2q}(A,m) \to \bW_{S_n(a,aq)}(k) \xto{V_a^1} \bW_{S_n(1,aq)}(k) \to K_{2q-1}(A,m) \to 0
\]
for each $q \geq 1$.
\end{lettertheorem}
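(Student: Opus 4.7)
The plan is to compute $K(A,m)$ through its cyclotomic trace to topological cyclic homology. By the Dundas--Goodwillie--McCarthy theorem, the relative trace $K(A,m) \to TC(A,m)$ is an equivalence after $p$-completion, where $p = \mathrm{char}(k)$; and $K(A,m)$ is $p$-torsion by Goodwillie's rational comparison with cyclic homology. So $K_*(A,m) \cong TC_*(A,m)$, and it suffices to compute $TC_*(A,m)$.

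To analyze $THH(A)$ as a cyclotomic spectrum, I would use that $A$ is the pointed monoid algebra on the pointed monoid $\Pi$ of words in $x_1, \ldots, x_n$ of length less than $a$ (with the product of any two concatenating to length $\geq a$ declared zero). Then $THH(A) \simeq THH(k) \sma N^{\mathrm{cy}}(\Pi)$, and $N^{\mathrm{cy}}(\Pi)$ splits $S^1$-equivariantly as a wedge indexed by cyclic sequences $(w_1, \ldots, w_\ell)$ of nonzero elements of $\Pi$ whose cyclic partial products remain nonzero, which forces the total length to be divisible by $a$. The cyclic permutation of such a sequence acts on the underlying cyclic word by shifts of blocks of length $a$, producing exactly the equivalence relation $\sim_a$ that defines $S_n(a, \infty)$.

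With this decomposition, I would pass to $C_r$-fixed points and invoke the identification $\pi_* THH(k)^{C_r} \cong \bW_r(k)[\mu_0]$ from the introduction, after which the truncation poset Witt vector formalism of \cite{An_genWitt} assembles the pieces. The length bound $aq$ appearing in the theorem arises from the degree constraint (only $\mu_0^q$ contributes in total degree $2q$) combined with the nilpotency $m^a = 0$, and odd-degree homotopy groups of the relevant assembled spectra vanish by the even concentration of $\pi_* THH(k)^{C_r}$. The $a$-block structure of $N^{\mathrm{cy}}(\Pi)$ induced from multiplication in $\Pi$ yields the source $\bW_{S_n(a, aq)}(k)$, while the full $S^1$-equivariant assembly under single-letter cyclic shifts yields the target $\bW_{S_n(1, aq)}(k)$. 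The Frobenius-minus-one map in the $TC$ fiber sequence translates, under the truncation poset dictionary, into the map $v_a^1 : S_n(a, aq) \to S_n(1, aq)$ which forgets the $a$-block identification, and hence into $V_a^1$ at the Witt vector level. The claimed short exact sequence then falls out of the long exact sequence of the resulting fiber sequence together with the parity vanishing.

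The main obstacle is executing the assembly step rigorously: matching the $S^1$-equivariant wedge decomposition of $THH(A)$ and its cyclotomic structure maps to the functorialities of truncation poset Witt vectors encoded by $|-| : S_n(a, N) \to \bN$, and verifying that the resulting map in the fiber sequence is precisely $V_a^1$ rather than some composition or twisted variant. This requires carefully disentangling the monoid structure on $\Pi$ (which forces the $a$-block identifications and controls the source) from the $S^1$-action on $N^{\mathrm{cy}}(\Pi)$ (which controls single-letter cyclic shifts and the target), and leveraging the functorial machinery of truncation poset Witt vectors developed in \cite{An_genWitt} to reduce the seemingly distinct Witt vector groups to a single map of combinatorial objects.
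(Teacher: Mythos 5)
Your overall strategy---cyclotomic trace, pointed monoid algebra, $S^1$-equivariant splitting of the cyclic nerve by cyclic words, parity and the Witt vector identification of $\pi_* THH(k)^{C_r}$---does match the paper's framework. But two important things are wrong or missing.

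First, you skip the paper's key technical input. To carry out the plan you have to actually identify each wedge summand $B^{cy}(\Pi_{a,n})[w]$ as a $C_s$-equivariant space, where $s$ is the length of the cyclic word. The paper shows $B^{cy}(\Pi_{a,n})[w] \cong S^1_+ \sma_{C_e}(\Delta^{s-1}/C_s\cdot\Delta^{s-a})$ and therefore needs the equivariant homotopy type of $\Delta^{s-1}/C_s\cdot\Delta^{s-a}$ for all $C_e\subset C_s$, $e\mid s$. When $n=1$, Hesselholt--Madsen only established the weaker $S^1_+\sma_{C_s}$-version, which is not enough here; the paper proves the stronger $C_s$-equivariant statement ($S^{V_d}$ when $a\nmid s$, the join $C_a * S^{V_d}$ when $a\mid s$), resolving a Hesselholt--Madsen conjecture via a combinatorial analysis of a simplicial chain complex. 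Without this, the assertion that the pieces contribute $\bW$-groups in exactly the claimed degrees---and in particular the extra homology and the cofiber structure when $a\mid s$---is unjustified. Your ``main obstacle'' paragraph gestures at the assembly problem but does not identify this as the missing ingredient.

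Second, you attribute the map $V_a^1$ in the theorem to ``the Frobenius-minus-one map in the $TC$ fiber sequence.'' This is a conceptual error. The Verschiebung $V_a^1$ arises as the \emph{transfer}: when $a\mid s$, the homotopy type $C_a * S^{V_d}$ sits in a cofiber sequence $(C_a)_+\sma S^{V_d}\to S^{V_d}\to C_a * S^{V_d}$, and smashing with $S^1_+$ over $C_e$ (and then taking $THH(k)\sma(-)$, fixed points, holim over $F$, and the $R=1$ equalizer) produces a cofiber description of $\TC(A,m)$ whose defining map is the transfer along inclusions of fixed points. Transfers become Verschiebung on homotopy, exactly as in Hesselholt--Madsen; the $F-1$ (or $R-1$) map is used in \emph{constructing} $\TC$ but is not the map whose cokernel and kernel appear in the theorem. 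If one tried to literally read off the short exact sequence from the $F-1$ long exact sequence, the source and target would be identified incorrectly and the map would not match $V_a^1$.

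Finally, a smaller inaccuracy: you say the cyclic nerve splits over ``cyclic sequences whose cyclic partial products remain nonzero, which forces the total length to be divisible by $a$.'' It does not: the splitting is indexed by cyclic words of \emph{all} lengths $s\geq 1$. The $a\mid s$ condition is not a constraint on which summands appear; rather it governs which summands have the extra top homology (the join factor) and hence contribute to the source $\bW_{S_n(a,aq)}(k)$.
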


For any truncation poset $S$ we can write $\bW_S(k)$ as a product of ordinary Witt vectors. In this case we get a product indexed by equivalence classes of irreducible words. But the generalized Verschiebung map becomes more complicated. We state the result in terms of classical Witt vectors for comparison.

\begin{cor} \label{c:main2}
Suppose $k$ is a perfect field of positive characteristic. The algebraic $K$-theory of $A = k\langle x_1,\ldots,x_n \rangle/(m^a)$ relative to $k$ is given by the following.
\begin{itemize}
 \item In odd degree we have
 \[
  K_{2q-1}(A,m) \cong \bigoplus_w \coker \big( V_{a/g} : \bW_{\lfloor gq/\ell(w) \rfloor}(k) \to \bW_{\lfloor aq/\ell(w) \rfloor}(k) \big).
 \]
 Here the direct sum is over all irreducible cyclic words $w$ in $x_1,\ldots,x_n$, $\ell(w)$ denotes the length of $w$, and $g=\gcd(a,\ell(w))$.
 \item In even degree we have
 \[
  K_{2q}(A,m) \cong \bigoplus_w \bigoplus_{g-1} \bW_{\lfloor gq/\ell(w) \rfloor}(k),
 \]
 where once again the direct sum is over all irreducible cyclc words $w$.
\end{itemize}
\end{cor}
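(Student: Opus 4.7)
The plan is to derive the corollary from Theorem~\ref{t:main} by decomposing the truncation poset Witt vectors $\bW_{S_n(a,aq)}(k)$ and $\bW_{S_n(1,aq)}(k)$ into products of classical Witt vectors indexed by irreducible cyclic words, and then identifying the generalized Verschiebung $V_a^1$ explicitly on each summand.

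First I would partition the sets $S_n(a,aq)$ and $S_n(1,aq)$ according to the unique irreducible cyclic word underlying each equivalence class. Fix such a word $w$ of length $\ell = \ell(w)$ and set $g = \gcd(a,\ell)$. Then $w^e$ has length divisible by $a$ precisely when $e = fa/g$ for some $f \geq 1$, and the constraint $e\ell \leq aq$ becomes $f \leq \lfloor qg/\ell \rfloor$. An orbit--stabilizer computation, applied to $\bZ/(e\ell)$ acting by rotation on $w^e$ and restricted to the subgroup of rotations by multiples of $a$, shows that the full cyclic rotation orbit of $w^{fa/g}$ (of size $\ell$, since $w$ is irreducible) breaks into exactly $g$ distinct $\sim_a$-equivalence classes of size $\ell/g$, and one checks using the identity $\sigma^c(u^f) = (\sigma^c(u))^f$ that the $f$-th power operation preserves these $g$ classes. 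Consequently, the piece of $S_n(a,aq)$ lying over $w$ is a disjoint union of $g$ chains of length $\lfloor qg/\ell \rfloor$, while the piece of $S_n(1,aq)$ lying over $w$ is a single chain of length $\lfloor aq/\ell \rfloor$.

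Invoking the product decomposition of Witt vectors on a truncation poset from~\cite{An_genWitt}, this gives
\[
 \bW_{S_n(a,aq)}(k) \cong \bigoplus_w \bW_{\lfloor qg/\ell \rfloor}(k)^g \quad\text{and}\quad \bW_{S_n(1,aq)}(k) \cong \bigoplus_w \bW_{\lfloor aq/\ell \rfloor}(k),
\]
where the sums are over irreducible cyclic words. Since $v_a^1$ preserves the underlying irreducible cyclic word, $V_a^1$ respects this decomposition. On each $w$-summand, $v_a^1$ sends the level-$f$ element of every one of the $g$ source chains to the level-$fa/g$ element of the single target chain, so the induced map on Witt vectors is the sum of $g$ copies of the classical Verschiebung $V_{a/g}\col \bW_{\lfloor qg/\ell \rfloor}(k) \to \bW_{\lfloor aq/\ell \rfloor}(k)$.

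Finally, for the map $\Phi = \sum_{i=1}^g V_{a/g}\col \bW_{\lfloor qg/\ell \rfloor}(k)^g \to \bW_{\lfloor aq/\ell \rfloor}(k)$, additivity and the (evident) injectivity of the classical Verschiebung imply that the image of $\Phi$ equals $V_{a/g}(\bW_{\lfloor qg/\ell \rfloor}(k))$ and that $\ker \Phi = \{(x_1,\ldots,x_g) : \sum_i x_i = 0\} \cong \bW_{\lfloor qg/\ell \rfloor}(k)^{g-1}$. Substituting these into the short exact sequence of Theorem~\ref{t:main} and summing over $w$ yields the two formulas in the corollary. The main technical point is the first step: the orbit counting together with the invocation of~\cite{An_genWitt} to see that the generalized truncation set $S_n(a,aq)$ really splits off as claimed and that the generalized Verschiebung on the truncation poset reduces to the classical one on each chain in the decomposition.
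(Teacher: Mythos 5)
Your argument is correct and follows the same route the paper intends: it combines Theorem~\ref{t:main} with the splitting lemma $\bW_{S_n(a,N)}(k) \cong \prod_w \bW_{\lfloor N/\ell(w)\rfloor}(k)$ and the ghost-coordinate description of $V_a^1$. The paper simply asserts that the corollary "is a restatement" of the theorem; your orbit--stabilizer count of the $g$ chains over each irreducible cyclic word and the identification of $V_a^1$ as a sum of $g$ copies of $V_{a/g}$ supply exactly the details the paper leaves implicit.
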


Lindenstrauss and McCarthy \cite{LiMc} state their calculation in the case $a=2$ in terms of $p$-typical Witt vectors, where $p$ is the characteristic of $k$. In that case the answer breaks up even further.

Note that Theorem \ref{t:main} recovers the calculation of Hesselholt and Madsen \cite{HeMa97b}, because when $n=1$ the generalized truncation sets are given by
\[
 S_1(a,aq) = \{a,2a,\ldots,aq\} \cong \{1,2,\ldots,q\}
\]
and
\[
 S_1(1,aq) = \{1,2,\ldots,aq\}.
\]
Hence $\bW_{S_1(a,aq)}(k) \cong \bW_q(k)$ and $\bW_{S_1(1,aq)}(k) \cong \bW_{aq}(k)$. It also follows from the definition that in this case $V_a^1$ is the usual Verschiebung map $V_a$.

\subsection*{Proof strategy}
We will use the cyclotomic trace map
\[
 \trc : K(A,m) \to \TC(A,m),
\]
from \cite{BoHsMa93} which by \cite{Mc97} is an equivalence after profinite completion. In fact, profinite completion turns out to be unneccessary, see \cite[Theorem 7.0.0.3]{DuGoMc13}. Then we will write $\TC(A,m)$ as the cofiber of a map
\[
 \TC'(A,m) \to \TC''(A,m),
\]
where the homotopy groups of $\TC'(A,m)$ and $\TC''(A,m)$ are the groups in the source and target of $V_a^1$.

One cruicial input is the equivariant homotopy type of the space $\Delta^{s-1}/C_s \cdot \Delta^{s-a}$. To prove Theorem \ref{t:main} we first establish a conjecture due to Hesselholt and Madsen which describes this homotopy type. This result might be of independent interest.

\subsection{Organization}
We start with a brief discussion of Witt vectors defined in terms of truncation posets in Section \ref{s:genWitt}, and discuss the particular truncation posets that come up in the calculations.

In Section \ref{s:polytopes} we discuss an important family of simplicial sets and prove a conjecture of Hesselholt and Madsen regarding their homotopy type. In Section \ref{s:monoids} we discuss the cyclic bar construction on the relevant pointed monoid, and in Section \ref{s:THHTFTC} we put everything together to prove Theorem \ref{t:main}.

Finally, in Section \ref{s:integers} we calculate the rational $K$-groups when $k=\bZ$.

\subsection{Acknowledgements}
The author would like to thank Ayelet Lindenstrauss and Lars Hesselholt for interesting conversations. This project got started while the author visited MSRI for a semester program in Algebraic Topology. The author was also supported by an Australian Research Council Discovery grant.

\section{Truncation poset Witt vectors} \label{s:genWitt}
We will very briefly review the construction of truncation poset Witt vectors from \cite{An_genWitt}. Recall that the ``classical'' Witt vectors can be defined for a truncation set $S \subset \bN=\{1,2,\ldots\}$ and that the \emph{length $n$ big Witt vectors} of $k$, denoted $\bW_n(k)$, are defined to be $\bW_S(k)$ for $S=\{1,2,\ldots,n\}$. The ring structure on $\bW_S(k)$ is defined in such a way that the ghost map $\bW_S(k) \to k^S$ is a ring map, functorially in $k$.

A \emph{truncation poset} is a partially ordered set $S$ with a map $|-| : S \to \bN$ satisfying axioms which imply that there is an isomorphism $S \cong \coprod S_i$ with each $S_i$ an ordinary truncation set. The ring $\bW_S(k)$ of $S$-Witt vectors is then isomorphic to the product of the rings $\bW_{S_i}(k)$. The power of this point of view comes from the fact that all the usual structure maps of Witt vectors can be encoded by maps of truncation posets, and that some maps are much easier to describe using this language.

We will be interested in certain truncation posets defined in terms of words in $x_1,\ldots,x_n$.

\begin{defn}
Let $\sim_a$ denote the equivalence relation on words of length divisible by $a$ given by cyclically permuting blocks of length $a$.
\end{defn}

If $w_1$ and $w_2$ are in the same equivalence class we say $w_1$ and $w_2$ are $a$-equivalent. If $a=1$ we call an equivalence class of words a \emph{cyclic word}. We can make the set of equivalence classes of words into a truncation poset as follows.

\begin{defn}
Given two $a$-equivalence classes of words $[w_1]$ and $[w_2]$, we say $[w_1]$ divides $[w_2]$ if there are representatives $w_1$ and $w_2$ with $w_2=w_1^e$ for some $e$.

If $S$ is a set of $a$-equivalence classes of words which is closed under division, we define $|[w]|$ to be the largest $e$ such that $[w]$ is divisible by $e$.
\end{defn}

For example, the words $w_1=x_1x_2x_1x_2$ and $w_2=x_2x_1x_2x_1$ are $1$-equivalent but not $2$-equivalent or $4$-equivalent. If $a=1$ or $a=2$ we have $|w_1|=|w_2|=2$, while if $a=4$ we have $|w_1|=|w_2|=1$.

With these definitions it is clear how to interpret the truncation posets in Definition \ref{d:ourtrunsets}.

If $f : S \to T$ is a map of truncation posets which satisfies some mild conditions (a $T$-map, in the language of \cite{An_genWitt}) we get an induced map $f_\oplus : \bW_S(k) \to \bW_T(k)$ which is given on ghost coordinates by
\[
 (f_\oplus^w \langle x_s \rangle)_t = \sum_{f(s)=t} \frac{|t|}{|s|} x_s.
\]
This is the map $V_a^b$ in Definition \ref{d:ourVmap}.

Note that the map $v_a^b : S_n(a,N) \to S_n(b,N)$ is neither injective nor surjective in general. For example, if $a=2$ and $b=1$ the words $w_1=x_1 x_2$ and $w_2=x_2 x_1$ are different in $S_n(2,2)$ while $[w_1]=[w_2]$ in $S_n(1,2)$. As a result, the induced map $V_a^b : \bW_{S_n(a,N)}(k) \to \bW_{S_n(b,N)}(k)$ will be neither injective nor surjective in general.

The next result is almost a tautology.

\begin{lemma}
The truncation poset $S_n(a,N)$ splits as
\[
 S_n(a,N) = \coprod_w S_n(a,N)[w],
\]
where the coproduct is over irreducible words in $S_n(a,N)$ and $S_n(a,N)[w]$ consists of all powers of $w$ in $S_n(a,N)$. Moreover, each $S_n(a,N)[w]$ is isomorphic to the ordinary truncation set $\{1,2,\ldots,\lfloor \frac{N}{\ell(w)} \rfloor\}$, with the isomorphism given by mapping $w^e$ to $e \in \bN$. Here $\ell(w)$ is the length of the word $w$. Hence
\[
 \bW_{S_n(a,N)}(k) \cong \prod_w \bW_{\lfloor \frac{N}{\ell(w)} \rfloor}(k),
\]
where the product is over irreducible words in $S_n(a,N)$.
\end{lemma}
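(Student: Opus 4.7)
The plan is to decompose $S_n(a,N)$ by grouping each element with its primitive root and then invoke the general principle from \cite{An_genWitt} that $\bW$ sends disjoint unions of truncation posets to products of rings.

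First, I would establish that every $[w] \in S_n(a,N)$ has a well-defined \emph{primitive root} in $S_n(a,N)$, meaning an irreducible element $[u] \in S_n(a,N)$ (one with $|[u]|=1$) such that $[w]=[u^e]$ for some $e \geq 1$. Existence is immediate from the definition of $|-|$: set $e=|[w]|$ and pick any $[u]$ realising the maximum. For uniqueness, suppose $[u_1^e]=[u_2^e]$ with $u_1,u_2$ irreducible. Then $\ell(u_1)=\ell(u_2)=:\ell$, and choosing representatives we have $u_2^e$ obtained from $u_1^e$ by a cyclic shift by $sa$ letters for some integer $s$. The periodicity of $u_1^e$ forces $u_2$ to be the cyclic shift of $u_1$ by $sa \bmod \ell$ letters, and since $a \mid \ell$ this is a shift by a whole number of blocks of $a$. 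Hence $u_2 \sim_a u_1$, i.e.\ $[u_1]=[u_2]$ in $S_n(a,N)$.

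Next I would define $S_n(a,N)[u]$ to be the set of $[w] \in S_n(a,N)$ with primitive root $[u]$. Existence and uniqueness of the primitive root give $S_n(a,N) = \coprod_u S_n(a,N)[u]$ as a set. Since $[u]$ is irreducible the classes $[u^1],[u^2],\ldots$ are pairwise distinct, and the map $[u^e]\mapsto e$ identifies $S_n(a,N)[u]$ with $\{1,2,\ldots,\lfloor N/\ell(u)\rfloor\}$; a direct check transports the divisibility relation and the size function $|-|$ to their standard counterparts. Finally, since divisibility preserves primitive roots, no element of $S_n(a,N)[u]$ is comparable to any element of $S_n(a,N)[u']$ when $[u]\neq[u']$, so the decomposition is one of truncation posets. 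Applying the coproduct-to-product property of $\bW$ then yields the stated isomorphism.

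The only genuinely nontrivial step is the uniqueness of the primitive root, since the ordinary primitive root of a representative word need not have length divisible by $a$, and one must argue that the coarser notion defined by the maximality of $|[w]|$ in $S_n(a,N)$ still specifies $[u]$ uniquely up to $a$-equivalence. Everything else is bookkeeping, consistent with the author's remark that the result is ``almost a tautology.''
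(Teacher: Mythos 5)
The paper offers no proof of this lemma; it is labelled ``almost a tautology'' and left to the reader, so there is nothing to compare against. Your argument is correct and is essentially the only reasonable route: reduce to well-definedness of a primitive root, then invoke the coproduct-to-product property of $\bW$ from \cite{An_genWitt}. Your key observation --- that a cyclic shift of $u_1^e$ by $sa$ letters restricts on the first $\ell=\ell(u_1)$ letters to a cyclic shift of $u_1$ by $sa \bmod \ell$ letters, which is again a multiple of $a$ because $a \mid sa$ and $a \mid \ell$ --- is exactly the point that makes the coarser notion of irreducibility in $S_n(a,N)$ behave well, and you are right that this is the genuine content.

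One spot deserves to be tightened. Your uniqueness argument assumes the two irreducible roots occur with the \emph{same} exponent $e$, which is automatic once one knows that every irreducible root of $[w]$ realises the maximum $|[w]|$; but that fact is also needed in the next step, where you identify $S_n(a,N)[u]$ (defined via primitive roots) with the set of all powers of $[u]$, since one must know $|[u^f]|=f$, i.e.\ that $[u^f]$ admits no irreducible root with a larger exponent. This follows from a short period/gcd argument in the same spirit as your shift argument: if $t^{ja}(u^f)=(w')^{f'}$ with $f'>f$, then $u^f$ has the period $\gcd(\ell(u),\ell(w'))$, which is a proper divisor of $\ell(u)$ and still a multiple of $a$, exhibiting $[u]$ as a nontrivial power in $S_n(a,N)$ and contradicting irreducibility. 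Folding that into the proof would close the only real gap; the rest is, as you say, bookkeeping.
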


Using this it is not hard to see that Corollary \ref{c:main2} is a restatement of Theorem \ref{t:main}.

\section{Cyclic polytopes and a conjecture of Hesselholt and Madsen} \label{s:polytopes}
When Hesselholt and Madsen \cite{HeMa97b} computed the algebraic $K$-theory of $k[x]/(x^a)$, they had to understand the $S^1$-equivariant homotopy type of
\[
 S^1_+ \sma_{C_s} (\Delta^{s-1}/C_s \cdot \Delta^{s-a}).
\]
Here $\Delta^{s-a} \subset \Delta^{s-1}$ is the subspace spanned by the first $s-a+1$ vertices, and $C_s \cdot \Delta^{s-a} = \bigcup_{i=0}^{s-1} t^i \Delta^{s-a}$, with $t$ cyclically permuting the vertices of $\Delta^{s-1}$. They found \cite[Theorem B]{HeMa97b} that
\[
 S^1_+ \sma_{C_s} (\Delta^{s-1}/C_s \cdot \Delta^{s-a}) \simeq_{S^1} \begin{cases} S^1_+ \sma_{C_s} S^{V_d} \quad & \textnormal{if $da < s < (d+1)a$} \\ S^1_+ \sma_{C_s} (C_a * S^{V_d}) \quad & \textnormal{if $s=(d+1)a$} \end{cases}
\]
Here $V_d$ is the $d$-dimensional complex $S^1$-representation $\bC(1) \oplus \ldots \oplus \bC(d)$. To prove Theorem \ref{t:main} we will need to understand the $S^1$-equivariant homotopy type of $S^1_+ \sma_{C_e} (\Delta^{s-1}/C_s \cdot \Delta^{s-a})$ for all $e \mid s$. The easiest way to do that is to prove the following result, conjectured by Hesselholt and Madsen:

\begin{thm}
We have
\[
 \Delta^{s-1}/C_s \cdot \Delta^{s-a} \simeq_{C_s} \begin{cases} S^{V_d} \quad & \textnormal{if $da < s < (d+1)a$} \\ C_a * S^{V_d} \quad & \textnormal{if $s=(d+1)a$} \end{cases}
\]
\end{thm}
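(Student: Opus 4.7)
My first step is to use that $\Delta^{s-1}$ is $C_s$-equivariantly contractible: the barycenter is a $C_s$-fixed point, and linear contraction to it commutes with the cyclic action. Writing $L := C_s \cdot \Delta^{s-a}$, the cofiber identification gives $\Delta^{s-1}/L \simeq_{C_s} \Sigma L$ (unreduced suspension), so it suffices to determine the equivariant homotopy type of $L$. Identifying the vertex set of $\Delta^{s-1}$ with $\bZ/s$ (with $C_s$ acting by translation), the complex $L$ has facets the $s$ cyclic arcs $W_i = \{i,i+1,\ldots,i+s-a\}$ of length $r := s-a+1$, and $\sigma \subseteq \bZ/s$ is a face of $L$ iff $\bZ/s \setminus \sigma$ contains an arc of length $\geq a-1$.

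The main approach would be equivariant Whitehead together with induction on $s$: it suffices to compare the $C_e$-fixed points of $L$ with those of the target model for every divisor $e$ of $s$, compatibly with subgroup inclusions. A $C_e$-invariant face of $L$ is a union of $C_e$-orbits, each of which is an arithmetic progression in $\bZ/s$ with common difference $s/e$ and length $e$. A short calculation shows such an orbit fits in an arc of length $r$ iff $ea \leq s$, in which case the quotient $\bZ/s \twoheadrightarrow \bZ/(s/e)$ induces an isomorphism $L^{C_e} \cong L_{s/e,a}$ with the analogous complex built from $s/e$ in place of $s$, equivariantly for the residual $C_{s/e}$-action; otherwise $L^{C_e} = \varnothing$. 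On the target side, character theory gives $(S^{V_d})^{C_e} = S^{V_{\lfloor d/e \rfloor}}$, and fixed points of a join satisfy $(X * Y)^{C_e} = X^{C_e} * Y^{C_e}$ (with convention $\varnothing * Y = Y$). A direct check of divisibility shows that the case distinction $da < s < (d+1)a$ versus $s=(d+1)a$ passes correctly to $(s/e,a)$ with $d$ replaced by $\lfloor d/e \rfloor$, so the $e>1$ comparisons reduce to the inductive hypothesis.

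The base of the induction, and the essential content, is the non-equivariant homotopy type of $L$ itself (the $e=1$ case). To handle this I would apply equivariant discrete Morse theory, constructing an acyclic matching on the face poset of $L$ by pairing a face $\sigma$ with $\sigma \triangle \{v\}$ where $v$ is determined purely by the cyclic gap pattern of $\sigma$ (for instance, the vertex just after a uniquely distinguished largest gap), so that the matching is automatically $C_s$-invariant. The main obstacle lies precisely in the boundary case $s=(d+1)a$: the highly symmetric faces $\{j,j+a,\ldots,j+da\}$ for $j=0,\ldots,a-1$ admit no cyclically invariant pairing and are forced to be critical, and these extra critical cells are exactly what assemble, via the inductive machinery of the previous paragraph, into the additional factor $C_a$ of the join $C_a * S^{V_d}$ in the target.
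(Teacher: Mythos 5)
Your opening reduction is clean and correct: since $\Delta^{s-1}$ is $C_s$-equivariantly contractible (contract to the barycenter), the cofiber $\Delta^{s-1}/L$ is indeed $C_s$-equivariantly the unreduced suspension $\Sigma L$ where $L = C_s\cdot\Delta^{s-a}$, and the target spaces are likewise unreduced suspensions, $S^{V_d} = \Sigma S(V_d)$ and $C_a * S^{V_d} = \Sigma\bigl(C_a * S(V_d)\bigr)$. Your fixed-point bookkeeping is also correct: the identification of $L^{C_e}$ with the analogous complex on $\bZ/(s/e)$ when $ea\leq s$ (and emptiness otherwise), the formula $(S^{V_d})^{C_e}=S^{V_{\lfloor d/e\rfloor}}$, and the fact that $(C_a)^{C_e}$ is $C_a$ if $e\mid d+1$ and empty otherwise (under the $C_s/C_{d+1}\cong C_a$-action), all check out and are compatible. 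Your remark that the ``case distinction passes correctly'' is slightly misstated --- when $s=(d+1)a$ and $e\nmid d+1$, the fixed-point problem falls into the non-boundary case --- but the target fixed points then lose their $C_a$ factor for exactly the same reason, so the comparison is still consistent.

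There are, however, two genuine gaps. First, you never produce a $C_s$-map $L\to Y$ (or equivalently $\Delta^{s-1}/L \to$ target). The equivariant Whitehead theorem compares a \emph{map} on all fixed-point sets; knowing that $L^{C_e}$ and $Y^{C_e}$ are abstractly equivalent for each $e$ does not by itself give a $C_s$-equivalence. The paper resolves this by importing the explicit map $\theta_{s,a}$ of Hesselholt--Madsen and checking it commutes with the fixed-point reductions $(\theta_{s,a})^{C_e}=\theta_{s/e,a}$; your induction would have to be supplemented by some such map, or by an appeal to a classification of $C_s$-spheres, which you do not give.

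Second, and more seriously, the discrete Morse argument for the base case --- which you correctly identify as ``the essential content'' --- is not carried out, and the specific critical cells you propose cannot be right. In the boundary case $s=(d+1)a$, the non-equivariant homotopy type of $L$ must be $\bigvee_{a-1}S^{2d}$ (so that its suspension is $\bigvee_{a-1}S^{2d+1}\simeq C_a*S^{V_d}$), which requires $a-1$ critical cells in dimension $2d$. The faces $\{j,j+a,\ldots,j+da\}$ that you single out have $d+1$ vertices and hence dimension $d$, not $2d$, and there are $a$ of them, not $a-1$. (A direct Euler characteristic check for $s=6,a=3,d=1$ gives $\tilde\chi(L)=2$, consistent with $\bigvee_2 S^2$, whereas three critical $1$-cells would force $\tilde\chi=-3$.) The matching rule ``pair $\sigma$ with $\sigma\triangle\{v\}$ for $v$ just after a uniquely distinguished largest gap'' is also not well-defined when several gaps tie for largest, and acyclicity of a $C_s$-invariant matching is precisely where such constructions tend to fail. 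The paper avoids this entirely: it builds a simplicial model $X_{s,a}$ from a Hochschild-style chain complex, proves $|X_{s,a}|\cong\Delta^{s-1}/C_s\cdot\Delta^{s-a}$, and computes $\widetilde H_*(X_{s,a})$ by an explicit chain-level reduction (Proposition \ref{p:poly}), which is then fed into the homology comparison along $\theta_{s,a}$. Your suspension observation is a nice simplification of the geometric side that the paper does not exploit, but as written the proposal does not contain a proof of the non-equivariant homotopy type of $L$, which is the heart of the theorem.
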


Hesselholt and Madsen used that the homology of $S^1_+ \sma_{C_s} (\Delta^{s-1}/C_s \cdot \Delta^{s-a})$ can be interpreted as a direct summand of the Hochschild homology of $\bZ[x]/(x^a)$, and the Buenos Aires Group had already determined $HH_*(\bZ[x]/(x^a))$ in \cite{BAG91}. There is no obvious homological algebra interpretation of the homology of $\Delta^{s-1}/C_s \cdot \Delta^{s-a}$, so the techniques from \cite{BAG91} do not apply. Instead we give a combinatorial proof.

\begin{defn}
Let $X_{s,a}$ denote the pointed simplicial set generated by an $(s-1)$-simplex $x_0 \otimes x \otimes \ldots \otimes x$ ($s$ tensor factors) and a basepoint $*$, with face and degeneracy maps as in the Hochschild chain complex and with relations $x^a=*$ and $x^{i-1} x_0 x^{a-i}=*$ for $1 \leq i \leq a$.
\end{defn}

Here we distinguish the zero'th tensor factor from the others to force
\[
  d_0(x_0 \otimes x \otimes \ldots \otimes x) = x_0 x \otimes x \otimes \ldots \otimes x
\]
to be different from
\[
 d_s(x_0 \otimes x \otimes \ldots \otimes x) = x x_0 \otimes x \otimes \ldots \otimes x.
\]

\begin{lemma}
The geometric realization of $X_{s,a}$ is homeomorphic to $\Delta^{s-1}/C_s \cdot \Delta^{s-a}$.
\end{lemma}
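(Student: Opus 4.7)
The plan is to construct an explicit continuous bijection from a compact quotient of $|\Delta^{s-1}|$ onto $|X_{s,a}|$ and then appeal to compactness. Both sides carry natural CW structures, and I would begin by matching the non-basepoint cells combinatorially. A non-basepoint non-degenerate $k$-cell of $\Delta^{s-1}/C_s\cdot\Delta^{s-a}$ is a subset $V\subseteq\{0,1,\ldots,s-1\}$ of size $k+1$ such that every cyclic arc determined by $V$ has length at most $a-1$; equivalently, $V\notin C_s\cdot\Delta^{s-a}$. On the other side, a non-basepoint non-degenerate $k$-simplex of $X_{s,a}$ is a tuple $(y_0,y_1,\ldots,y_k)$ of words in $\{x_0,x\}$ with each $y_j$ of length between $1$ and $a-1$, $y_0$ containing the unique $x_0$, and total length $s$. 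The natural bijection sends $V$ to the cyclic partition of $0,1,\ldots,s-1$ into arcs with left endpoints $V$, takes $y_0$ to be the arc containing position $0$ (with the position of $x_0$ inside $y_0$ read off from the position of $0$ inside that arc), and labels the remaining arcs $y_1,\ldots,y_k$ in cyclic order.

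I would package this bijection into a continuous map $\phi\col|\Delta^{s-1}|\to|X_{s,a}|$ as follows. Given $\mathbf{t}=(t_0,\ldots,t_{s-1})$ with support $V=\{i:t_i>0\}$ of size $k+1$ and associated partition $(y_0,\ldots,y_k)$ with left endpoints $w_0,w_1,\ldots,w_k$, send $\mathbf{t}$ to the point of the cell $(y_0\otimes\cdots\otimes y_k)\subset|X_{s,a}|$ with barycentric coordinates $u_i=t_{w_{i+1\bmod(k+1)}}$. The essential check is that $\phi$ respects face attachments: removing $w_j$ from $V$ for $j\geq 1$ merges arcs $y_{j-1}$ and $y_j$, matching the Hochschild face map $d_{j-1}$ on $X_{s,a}$, while removing $w_0$ merges $y_k$ and $y_0$ cyclically (with $x_0$ now inside $y_k y_0$), matching the cyclic Hochschild face $d_k$. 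With these identifications, $\phi$ is continuous on each open face and continuous across boundaries, and its preimage of the basepoint is exactly $|C_s\cdot\Delta^{s-a}|$ since the partition associated to $V$ has an arc of length $\geq a$ if and only if $V$ lies in an arc of $s-a+1$ consecutive positions. Hence $\phi$ factors through the quotient to a continuous bijection $|\Delta^{s-1}|/|C_s\cdot\Delta^{s-a}|\to|X_{s,a}|$, which is a homeomorphism since the source is compact and the target is Hausdorff.

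I expect the main obstacle to be carefully handling the indexing around the cyclic face map $d_k$, which wraps around and shifts the marker $x_0$ from $y_0$ to the combined arc $y_k y_0$; the off-by-one between ``remove vertex $w_j$ of $V$'' and ``the Hochschild face $d_{j-1}$'' must be tracked consistently. Once this convention is fixed, the verification reduces to routine matching of face identities.
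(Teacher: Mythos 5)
Your proof takes essentially the same route as the paper's, which simply asserts that the claim is formal: $X_{s,a}$ is the quotient of the pointed simplicial set freely generated by the $(s-1)$-simplex $x_0 \otimes x \otimes \cdots \otimes x$ (which realizes to $\Delta^{s-1}$) by the subcomplex where the relations force a collapse, and that subcomplex is exactly $C_s \cdot \Delta^{s-a}$. You flesh out this one-liner into an explicit cell-by-cell argument, which is a reasonable way to substantiate ``formal,'' and your compactness-plus-Hausdorff closing move is sound. The cell count and the basepoint characterization (every cyclic arc has length $\leq a-1$ iff $V \notin C_s\cdot\Delta^{s-a}$) are both correct.

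However, your chosen indexing convention --- arcs with \emph{left} endpoints in $V$, with $x_0$ placed at the position of $0$ read from the left --- does not match the Hochschild face maps. Test it on $d_0$ of the top cell: removing vertex $0$ gives $V=\{1,\ldots,s-1\}$, whose arc containing $0$ under your convention is $\{s-1,0\}$ with $0$ at index $1$, yielding $y_0 = xx_0$; but $d_0(x_0\otimes x\otimes\cdots\otimes x) = x_0x\otimes x\otimes\cdots\otimes x$, so $y_0$ should be $x_0x$. The fix is to use arcs with \emph{right} endpoints in $V$: then $w_0 < w_1 < \cdots < w_k$ is just the sorted order of $V$ (with $w_0$ the right endpoint of the arc containing $0$), removing $w_j$ merges $y_j$ into $y_{j+1}$ for $j<k$ (matching $d_j$, with no off-by-one), and removing $w_k$ merges $y_k$ into $y_0$ cyclically (matching $d_k$). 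You anticipated that exactly such a convention adjustment would be needed, and indeed once it is made the verification goes through; so this is a correctable slip rather than a gap in the argument.
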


\begin{proof}
This is formal, because $X_{s,a}$ is generated by a single $(s-1)$-simplex and the relations collapses $t^i \Delta^{s-a}$ for $0 \leq i \leq s-1$ to a point. 
\end{proof}

Let $C_*(X_{s,a})$ be the simplicial chain complex of $X_{s,a}$, generated by the non-degenerate simplices. Then we can write an element $\alpha \in C_e(X_{s,a})$ as a sum
\[
 \alpha = \sum_{\vec{k}} c_{\vec{k}} \cdot x^{k_0'} x_0 x^{k_0''} \otimes x^{k_1} \otimes \ldots \otimes x^{k_e}
\]
over vectors $\vec{k} = (k_0', k_0'', k_1,\ldots,k_e)$ with the following conditions on $\vec{k}$:
\begin{enumerate}
\item We have $k_0' + k_0'' + k_1 + \ldots + k_e = s-1$.
\item We have $k_0'+1+k_0''<a$ and $1 \leq k_i < a$ for all $1 \leq i \leq e$.
\end{enumerate}

\noindent
Let $d = \lfloor \frac{s-1}{a} \rfloor$.

\begin{prop} \label{p:poly}
The reduced homology of $X_{s,a}$ is given as follows.
\begin{enumerate}
\item \label{eq:poly_part1} If $a \nmid s$ the only non-trivial reduced homology of $X_{s,a}$ is a $\bZ$ in degree $2d$, represented by the cycle
\[
 \iota_{2d} = \sum_{k_0'' + k_2 + k_4 + \ldots + k_{2d} = s-d-1} x_0 x^{k_0''} \otimes x \otimes x^{k_2} \otimes x \otimes \ldots \otimes x \otimes x^{k_{2d}}.
\]
\item \label{eq:poly_part2} If $a \mid s$ the only non-trivial reduced homology of $X_{s,a}$ is $\bZ^{a-1}$ in degree $2d+1$, represented by the cycles
\[
 \iota_{2d+1,i} = x^i x_0 x^{a-i-2} \otimes x \otimes x^{a-1} \otimes x \otimes \ldots \otimes x^{a-1} \otimes x
\]
for $0 \leq i \leq a-2$.
\end{enumerate}

\end{prop}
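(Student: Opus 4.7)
The plan is to give a combinatorial proof by discrete Morse theory. First I would verify that $\iota_{2d}$ (in case $a \nmid s$), respectively each $\iota_{2d+1,i}$ (in case $a \mid s$), is a cycle. For $\iota_{2d+1,i}$ the verification is essentially trivial: every face map $d_j$ merges two adjacent blocks of sizes $(a-1,1)$ or $(1,a-1)$ (including the cyclic face $d_{2d+1}$, after accounting for the marking shift), producing a merged block of size $a$ which is killed by the relations $x^a = *$ and $x^{i-1} x_0 x^{a-i} = *$. For $\iota_{2d}$, the surviving terms in $d(\iota_{2d})$ pair off via the identification $d_{2i} \sigma = d_{2i+1} \sigma'$, where $\sigma'$ differs from $\sigma$ by incrementing $k_{2i}$ (or $k_0''$ when $i = 0$) and decrementing $k_{2i+2}$ so the total sum is preserved; these occur with opposite signs in $d(\iota_{2d})$ and cancel. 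A direct counting argument using the constraint $k_0'' + k_2 + \cdots + k_{2d} = s - d - 1$ together with the bounds $k_0'' \leq a - 2$ and $k_{2j} \leq a - 1$ shows that in case $a \nmid s$ (i.e.\ $da < s < (d+1)a$) the pathological boundary terms where $\sigma'$ would fall outside the valid range, as well as the cyclic face $d_{2d} \sigma$, all hit the basepoint; hence every term cancels and $d(\iota_{2d}) = 0$.

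To show that these cycles generate $\widetilde{H}_*(X_{s,a})$ and nothing else, I would construct an acyclic matching on the generators of $C_*(X_{s,a})$ whose unmatched (critical) cells have the right rank in the correct degree, and whose Morse-flow extensions are the cycles in the proposition. Given a generator $(k_0, k_0'; k_1, \ldots, k_e)$, scan positions $i \geq 1$ from the left for the first deviation from the critical pattern $(k_0, 1, k_2, 1, \ldots)$ with $k_0' = 0$; pair the generator either with its split (going up one dimension) or with its merge (going down) at that position, the choice calibrated to make the matching an involution. Particular care is needed for the shift of the marking $k_0'$ when the split or merge occurs within the first block, and for the interaction with the cyclic face map. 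In case $a \nmid s$ this leaves a single critical cell in degree $2d$; in case $a \mid s$ it leaves $a - 1$ critical cells in degree $2d + 1$, one for each marking $k_0' \in \{0, \ldots, a-2\}$ within the unique maximal first block of size $a - 1$, which are precisely the cycles $\iota_{2d+1, i}$.

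The main obstacle is verifying acyclicity of the matching: one must rule out directed cycles in the combinatorial gradient flow, which requires a patient case analysis of the interaction between the scan rule, the marking $k_0'$, and the cyclic face. Once this is established, the Morse complex is concentrated in a single degree with the correct rank, and the Morse-flow lift of the critical cells recovers the explicit cycles given in the statement.
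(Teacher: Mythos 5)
Your discrete-Morse approach is genuinely different in form from the paper's, though it is doing the same underlying work: the paper takes an arbitrary cycle $\alpha$ and \emph{explicitly} modifies it by boundaries (making $k_1=1$, then $k_3=1$, etc.) until it is forced into the normal form $\iota_{2d}$ or $\iota_{2d+1,i}$. That sequence of normalizations is essentially a hand-integration of a gradient flow, and the step you rightly flag as the main obstacle — acyclicity — is what the paper handles by a direct analysis in step (B) of its proof, where it traces how the terms of $d(\alpha_1)$ must cancel pairwise (the identity $d_0(\alpha_1)-d_1(\alpha_1)=0$, and the coefficient identity in their equation (3.5)). So the two arguments are closely related, but the paper never invokes the general machinery; it just does the collapse by hand.

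There is, however, a concrete gap in your proposal beyond the acknowledged acyclicity issue. You describe the critical pattern in the $a\nmid s$ case as $(k_0,1,k_2,1,\ldots)$ with $k_0'=0$, with even-index entries apparently free, and claim the matching leaves a \emph{single} critical cell in degree $2d$. But the number of generators of $C_{2d}(X_{s,a})$ with $k_0'=0$ and $k_1=k_3=\cdots=k_{2d-1}=1$ is the number of solutions of $k_0''+k_2+\cdots+k_{2d}=s-d-1$ with $0\le k_0''\le a-2$ and $1\le k_{2i}\le a-1$, which is typically much greater than one. (Already for $s=4$, $a=3$ there are two: $(0,0,1,2)$ and $(0,1,1,1)$.) Either your critical pattern must additionally force $k_{2i}=a-1$ for $i\ge 1$ and $k_0''=s-1-da$, which pins down a unique cell, or your matching rule must pair all but one of these cells with cells of degree $2d\pm 1$; in the latter case the rule must act on the even-index slots, which your description (``scan positions $i\ge 1$ for the first deviation'') does not obviously do. Until this is resolved, the claim that there is exactly one critical cell in degree $2d$ is unjustified, and if more than one survives and all sit in degree $2d$, the Morse complex would incorrectly report $H_{2d}$ of rank bigger than one. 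Relatedly, you state that the Morse-flow lift of the single critical cell recovers the full cycle $\iota_{2d}$ (a sum of many basis elements); this is plausible but needs the flow computed, since the integrated signs could in principle disagree with those in the stated formula.

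A compensating advantage of your route, if the matching and acyclicity were nailed down: all critical cells lie in a single degree, so the Morse differential vanishes automatically, and you would get the full statement (including that the given cycles are nonzero and span) in one stroke. The paper instead defers linear independence of $\iota_{2d}$ and $\iota_{2d+1,i}$ to the proof of Theorem~\ref{t:thetaequiv}, so its Proposition~\ref{p:poly} proof only establishes the ``no extra summands'' half.
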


\begin{proof}
It follows from the proof of Theorem \ref{t:thetaequiv} below that $\iota_{2d}$ and $\iota_{2d+1,i}$ are linearly independent in $H_*(X_{s,a})$ so it suffices to prove that $H_*(X_{s,a})$ does not have any other summands.

We prove both parts simultaneously. The case $a=2$ is clear, so we will assume $a > 2$. Let
\[
 \alpha = \sum_{\vec{k}} c_{\vec{k}} \cdot x^{k_0'} x_0 x^{k_0''} \otimes x^{k_1} \otimes \ldots \otimes x^{k_e}
\]
be a cycle, and suppose $\alpha$ represents a non-zero element in $H_e(X_{s,a})$. Then we want to show that $\alpha$ is homologous to a multiple of $\iota_{2d}$ or a linear combination of $\iota_{2d+1,i}$ for $0 \leq i \leq a-2$.
\newline

\noindent
(A) We modify $\alpha$ so that $k_1=1$. Let
\[
 \beta_{1,1} = \sum_{\vec{k} \textnormal{ with } k_1>1} c_{\vec{k}} \cdot x^{k_0'} x_0 x^{k_0''} \otimes x^{k_1-1} \otimes x \otimes x^{k_2} \otimes \ldots \otimes x^{k_e}
\]
and let $\alpha_{1,1} = \alpha+d(\beta_{1,1})$. Then $\alpha_{1,1}$ is a sum over $\vec{k}$ with $k_1 \leq a-2$. Repeating the process with $\beta_{1,2},\ldots,\beta_{1,a-2}$ then produces a representative
\[
 \alpha_1 = \sum_{\vec{k} \textnormal{ with } k_1=1} c^1_{\vec{k}} \cdot x^{k_0'} x_0 x^{k_0''} \otimes x \otimes x^{k_2} \otimes \ldots \otimes x^{k_e}.
\]
\newline

\noindent
(B) Now we make the cruicial observation that in order for $\alpha_1$ to be a cycle, the terms in $d(\alpha_1)$ have to cancel in a very particular way. Suppose $k_2 \leq a-2$, so that $x^{1+k_2} \neq 0$. The only way to cancel a term
\[
 d_1(x^{k_0'} x_0 x^{k_0''} \otimes x \otimes x^{k_2} \otimes \ldots \otimes x^{k_e}) = -x^{k_0'} x_0 x^{k_0''} \otimes x^{1+k_2} \otimes \ldots \otimes x^{k_e}
\]
is from a corresponding term
\[
 d_0(x^{k_0'} x_0 x^{k_0''-1} \otimes x \otimes x^{1+k_2} \otimes \ldots \otimes x^{k_e}).
\]
This term only exists if $k_0'' \geq 1$; if $k_2 \leq a-2$ and $k_0''=0$ we reach a contradiction. If $1+k_2 \leq a-2$ we can repeat the argument, and it follows that $k_0'' + k_2 \geq a-1$ and that
\begin{equation}
 c^1_{k_0',k_0'', k_2,\ldots,k_e} = c^1_{k_0', k_0''-i, k_2+i,\ldots,k_e} \label{eq:shuffle}
\end{equation}
for every $i \geq 0$ with $k_2+i \leq a-1$.

Next suppose $k_0'+1+k_0''+1 \leq a-1$ and consider how to cancel the term
\[
 d_0(x^{k_0'} x_0 x^{k_0''} \otimes x \otimes x^{k_2} \otimes \ldots \otimes x^{k_e}) = x^{k_0'} x_0 x^{k_0''+1} \otimes x^{k_2} \otimes \ldots x^{k_e}.
\]
If follows that $k_2 \geq 2$ and that this term can only cancel with
\[
 d_1(x^{k_0'} x_0 x^{k_0''+1} \otimes x \otimes x^{k_2-1} \otimes \ldots \otimes x^{k_e}).
\]
Hence Equation \ref{eq:shuffle} holds for negative $i$ as well, as long as $k_0'+1+k_0''+i \leq a-1$.

It then follows that
\[
 d_0(\alpha_1) - d_1(\alpha_1) = 0,
\]
so
\[
 d(\alpha) = \sum_{i=2}^e (-1)^i d_i(\alpha_1).
\]

\noindent
Now we repeat the above two steps. The analogue of (A) is as follows. Let
\[
 \beta_{3,1} = \sum_{\vec{k} \textnormal{ with } k_3>1} c^1_{\vec{k}} x^{k_0'} x_0 x^{k_0''} \otimes x \otimes x^{k_2} \otimes x^{k_3-1} \otimes x \otimes \ldots \otimes x^e
\]
and let $\alpha_{3,1} = \alpha_1 + d(\beta_{3,1})$. Repeating the process produces a representative
\[
 \alpha_3 = \sum_{\vec{k} \textnormal{ with } k_1=k_3=1} c^3_{\vec{k}} \cdot x^{k_0'} x_0 x^{k_0''} \otimes x \otimes x^{k_2} \otimes x \otimes x^{k_4} \otimes \ldots \otimes x^{k_e}.
\]
Note that we still have $d_0(\alpha_3)-d_1(\alpha_3)=0$.

The analogue of (B) is as follows. Because only $d_2$ and $d_3$ produce elements with $k_3 \geq 2$, it follows that there is only one way a term $d_3(-)$ can cancel, It follows that if $k_4 \leq a-2$ then $c^3_{k_0', k_0'', k_2,k_4,\ldots,k_e} = c^3_{k_0',k_0'', k_2-1, k_4+1,\ldots,k_e}$. As before the only way to avoid a contradiction is to have $k_2+k_4 \geq a$. We can also conclude that $d_2(\alpha_3)-d_3(\alpha_3)=0$, so $d(\alpha_3) = \sum_{i=4}^e (-1)^i d_i(\alpha_3)$.

Suppose $e=2d'$ is even. Then we can repeat (A) and (B) to produce a representative $\alpha_{2d'-1}$, and finally we can modify $\alpha_{2d'-1}$ by defining
\[
 \beta = \sum_{\vec{k}} c^{2d'-1}_{\vec{k}} \cdot x_0 x^{k_0''} \otimes x \otimes \ldots \otimes x^{k_{2d'}} \otimes x^{k_0'}
\]
and setting $\alpha_{2d'} = \alpha_{2d'-1}-d(\beta)$. It follows that $\alpha_{2d'}$ is given by a sum
\[
 \alpha_{2d'} = \sum_{\vec{k}} c \cdot x_0 x^{k_0''} \otimes x \otimes x^{k_2} \otimes \ldots \otimes x \otimes x^{k_{2d'}}.
\]
If $d'=d$ this is $c$ times the cycle given in Part (\ref{eq:poly_part1}). If $d' < d$ we get a contradiction because the degree of $\alpha_{2d'}$ (which is equal to $s$) is at most $ad'$, but $ad' < ad \leq s-1$. If $d > d'$ we get a contradiction by considering the extreme case $k_2=k_4=\ldots=k_{2d'}=a-1$. It follows that $k_0''=s-d'a-1$, which is negative.

Now suppose $e=2d'+1$ is odd. Then we can repeat (A) and (B) to produce a representative $\alpha_{2d'-1}$ where $c^{2d'-1}_{\vec{k}}$ depends only on $k_0'$, and finally we can modify $\alpha_{2d'-1}$ by defining
\[
 \beta = \sum_{\vec{k} \textnormal{ with } k_{2d'+1} > 1} c^{2d'-1}_{\vec{k}} \cdot x^{k_0'} x_0 x^{k_0''} \otimes \ldots \otimes x \otimes x^{k_{2d'+1}-1}
\]
and $\alpha_{2d'+1} = \alpha_{2d'-1} - d(\beta)$. Then
\[
 \alpha_{2d'+1} = \sum_{\vec{k}} c^{2d'+1}_{\vec{k}} \cdot x^{k_0'} x_0 x^{k_0''} \otimes x \otimes x^{k_2} \otimes x \otimes \ldots \otimes x^{k_{2d'}} \otimes x,
\]
where $c^{2d'+1}_{\vec{k}}$ depends only on $k_0'$. If $d'=d$ then $k_0'+1+k_0'' = a-1$ and $k_{2i}=a-1$ for all $1 \leq i \leq 2d$, and $\alpha_{2d+1}$ is a linear combination of the cycles given in Part (\ref{eq:poly_part2}). If $d'<d$ we get a contradiction because the degree of $\alpha_{2d'+1}$ is too small, and if $d'>d$ we get a contradiction by considering the extreme case $k_2=k_4=\ldots=k_{2d'}=a-1$.
\end{proof}

We need to recall some things from \cite{HeMa97b}. First suppose $a \nmid s$ and let $d= \lfloor \frac{s-1}{a} \rfloor$ as above. By standard representation theory there is a projection $\pi : \bR C_s \to V_d$. Hesselholt and Madsen let $P_{s,d}=\pi(\Delta^{s-1})$ and $Q_{s,d}=\pi(C_s \cdot \Delta^{s-a})$, so that there is an induced map $\pi_d : \Delta^{s-1}/C_s \cdot \Delta^{s-a} \to P_{s,d}/Q_{s,d}$.

There is another map $r : P_{s,d}/Q_{s,d} \to P_{s,d}/\partial P_{s,d} = S^{V_d}$ so we get a map
\[
 \theta_{s,a} = r \circ \pi_d : \Delta^{s-1}/C_s \cdot \Delta^{s-a} \to S^{V_d}.
\]

If $a \mid s$, they consider $\pi_{d+1} : \Delta^{s-1}/C_s \cdot \Delta^{s-a} \to P_{s,d+1}/Q_{s,d+1}$ instead, and another map $r : P_{s,d+1}/Q_{s,d+1} \to C_n * S^{V_d}$ so in this case we get a map
\[
 \theta_{s,a} = r \circ \pi_{d+1} : \Delta^{s-1}/C_s \cdot \Delta^{s-a} \to C_n * S^{V_d}.
\]

The join $C_n * S^{V_d}$ can be thought of as the union of $n$ copies of the cone $C S^{V_d}$ along $S^{V_d}$. Non-equivariantly this is homotopy equivalent to a wedge of $n-1$ copies of $\Sigma S^{2d} \cong S^{2d+1}$. Hence we see that the source and target of $\theta_{s,a}$ have isomorphic homology groups.

\begin{thm} \label{t:thetaequiv}
The map $\theta_{s,a}$ is a $C_s$-equivariant homotopy equivalence both when $a \nmid s$ and when $a \mid s$.
\end{thm}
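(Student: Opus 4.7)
The plan is first to establish that $\theta_{s,a}$ is a non-equivariant homotopy equivalence, and then upgrade to a $C_s$-equivariant equivalence using equivariant Whitehead.

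For the non-equivariant step, both source and target have reduced homology concentrated in a single degree with the same rank: Proposition \ref{p:poly} gives $\widetilde{H}_*(X_{s,a})$, while $\widetilde{H}_*(S^{V_d}) = \bZ$ in degree $2d$ and $\widetilde{H}_*(C_a * S^{V_d}) = \bZ^{a-1}$ in degree $2d+1$. Both sides are simply connected when $d \geq 1$ (the low-dimensional cases are checked directly), so by Whitehead it suffices to show that $\theta_{s,a} = r \circ \pi_d$ induces an isomorphism on top homology. I would verify this by an explicit calculation: apply $\pi$ vertex-by-vertex to the cycle $\iota_{2d}$ (respectively $\iota_{2d+1,i}$) from Proposition \ref{p:poly}, then apply $r$, and check that the image is a generator of the top homology of the target. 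This same computation shows that the $\iota$'s are non-zero in $H_*(X_{s,a})$, providing the linear independence used in the proof of Proposition \ref{p:poly} and closing the circular dependence between the two results.

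For the equivariant upgrade, by equivariant Whitehead for $C_s$-CW complexes it suffices to show that $\theta_{s,a}^{C_e}$ is a non-equivariant equivalence for every $e \mid s$. I would proceed by strong induction on $s$, the base $e = 1$ being the non-equivariant case. Writing $s = es'$, the rescaling map identifies $(\Delta^{s-1})^{C_e}$ with $\Delta^{s'-1}$, and a $C_e$-fixed point lies in $C_s \cdot \Delta^{s-a}$ iff its $s'$-periodic representative has $a-1$ consecutive zero coordinates in the cyclic length-$s'$ array. When $a \leq s'$ this gives $(X_{s,a})^{C_e} \cong X_{s',a}$; when $a > s'$ periodicity forbids any fixed point from lying in the collapsed subspace, so $(X_{s,a})^{C_e} \simeq S^0$. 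On the target side, using $(S^V)^G = S^{V^G}$ and $(X*Y)^G = X^G * Y^G$, one finds $(S^{V_d})^{C_e} = S^{V_d^{C_e}}$ with $\dim_\bC V_d^{C_e} = \lfloor d/e \rfloor$, and $(C_a * S^{V_d})^{C_e} = (C_a)^{C_e} * S^{V_d^{C_e}}$, where $(C_a)^{C_e}$ is all of $C_a$ when $ae \mid s$ and empty otherwise. A short divisor calculation shows these match the source and target of $\theta_{s',a}$; inspecting how $r \circ \pi$ restricts to $C_e$-fixed subspaces shows that $\theta_{s,a}^{C_e}$ corresponds to $\theta_{s',a}$ when $a \leq s'$, or to a homotopy equivalence $S^0 \to S^0$ when $a > s'$. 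Induction then concludes.

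The main obstacle I expect is the case-by-case matching in the equivariant step. The case $a \mid s$, where the target is the join $C_a * S^{V_d}$ and the $C_s$-action on $C_a$ factors through the quotient $C_s \to C_a$, requires the most careful bookkeeping in both the fixed-point computation and the identification of $\theta_{s,a}^{C_e}$ with $\theta_{s',a}$. The explicit homology computation in the first step, tracing a simplex representative through $r \circ \pi$ and checking that the image is primitive in the target, is the other delicate point and is the key combinatorial input to the whole argument.
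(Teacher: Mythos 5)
Your proposal follows essentially the same two-step structure as the paper: reduce the $C_s$-equivariant statement to a non-equivariant one by passing to $C_e$-fixed points and observing $(\theta_{s,a})^{C_e} = \theta_{s/e,a}$, then establish the non-equivariant equivalence via Whitehead using Proposition~\ref{p:poly} together with a check that the explicit generating cycles $\iota_{2d}$, $\iota_{2d+1,i}$ map to generators of the target, which simultaneously supplies the linear independence that Proposition~\ref{p:poly} quotes from this proof. The differences are presentational: the paper invokes (and slightly strengthens, with the chains $c_{2d+1,i}$) Lemmas 3.2.5 and 3.3.8 of Hesselholt--Madsen for the top-homology computation rather than redoing it from scratch, and dispenses with your induction on $s$ by a direct reduction; your explicit unwinding of the fixed-point identification, including the degenerate case $a > s/e$ where both sides collapse to $S^0$, is a detail the paper leaves implicit.
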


The proof relies heavily on Hesselholt and Madsen's paper \cite{HeMa97b}, especially Section 3. Rather than repeating that entire section we assume the reader is familiar with it and explain the minor modifications necessary to prove this stronger result.

\begin{proof}
It suffices to prove that $\theta_{s,a}$ induces a non-equivariant homotopy equivalence on $C_e$-fixed points for all $e \mid s$. But the $C_e$-fixed points of $\theta_{s,a}$ is $\theta_{s/e,a}$, so it follows that it suffices to prove that $\theta_{s,a}$ induces a non-equivariant homotopy equivalence for all $s$.

If $d > 0$ the source and target of $\theta_{s,a}$ are both simply-connected, so it suffices to show that $\theta_{s,a}$ induces an isomorphism on reduced homology. If $d=0$ a direct geometric argument can be used instead.

The case when $a \nmid s$ follows as in \cite[Lemma 3.2.5]{HeMa97b}. The only thing missing in \emph{loc.\ cit.\ }to conclude is the fact that $\Delta^{s-1}/C_s \cdot \Delta^{s-a}$ does not have any more homology, which we proved in Proposition \ref{p:poly} above.

To conclude in the case when $a \mid s$, we use a slightly stronger version of \cite[Lemma 3.3.8]{HeMa97b}. Instead of the single chain $c : \Delta^{2d+1} \to \Delta^{s-1}$ we use the $a-2$ chains $c_{2d+1,i} : \Delta^{2d+1} \to \Delta^{s-1}$ defined by
\[
 c_{2d+1,i} = (d^0)^{a-i-2} (d^2)^{a-2} \cdots (d^{2d})^{a-2} (d^{2d+2})^i.
\]
These are obtained by dualizing the face maps needed to go from $x_0 \otimes x \otimes \ldots \otimes x$ to the generators of $\widetilde{H}_{2d+1}(X_{s,d})$. (Note that there is a typo in the definition of $c$ in \cite[p.\ 86]{HeMa97b}, the exponents should be $n-2$ rather than $n-1$.)

The point is that the proof of \cite[Lemma 3.3.8]{HeMa97b} actually shows that $\theta_{s,a}(c)$ is a particular generator of $\widetilde{H}_{2d+1}(C_n * S^{V_d})$. This is because $\partial \theta_{s,a}(c)$ in $C_{2d}(S(V_d))$ intersects $\textbf{n}$ in a single point. (Here $\textbf{n}$ is the set of midpoints in the regular $n$-gon in $\bC(\xi_s^{d+1})$ as defined near the top of \cite[p.\ 86]{HeMa97b}.) But each of the $n-2$ chains $\partial \theta_{s,a}(c_{2d+1,i})$ intersect $\textbf{n}$ in a different point, so the result follows.
\end{proof}

\section{Pointed monoids} \label{s:monoids}
Fix integers $n \geq 1$ and $a \geq 2$. We will consider the pointed monoid
\[
 \Pi_{a,n} = \{0\} \cup \{w \quad | \quad \textnormal{$w$ is a word in $x_1,\ldots,x_n$ of length $\ell(w)<a$} \}.
\]
Here multiplication is given by concatenation of words, and two words multiply to $0$ if their length adds up to $a$ or more. We write $1$ for the empty word. For example,
\[
 \Pi_{2,n} = \{0,1,x_1,\ldots,x_n\}.
\]
Given a pointed monoid $\Pi$ and a ring $k$ we are interested in the pointed monoid algebra $k(\Pi)$. We get
\[
 k(\Pi_{a,n}) = k \langle x_1,\ldots,x_n \rangle/(m^a),
\]
where $k\langle x_1,\ldots,x_n \rangle$ denotes the polynomial ring in $n$ non-commuting variables and $m=(x_1,\ldots,x_n)$ is the usual maximal ideal. If $a=2$ this coincides with the truncated polynomial ring in $n$ commuting variables.

The essential ingredient in the proof of Theorem \ref{t:main} is an understanding of the $S^1$-equivariant homotopy type of $B^{cy}(\Pi_{a,n})$, with $\Pi_{a,n}$ as defined in the previous section.

\begin{lemma}
We have an $S^1$-equivariant splitting
\[
 B^{cy}(\Pi_{a,n}) \cong \bigvee_w B^{cy}(\Pi_{a,n}w)[w],
\]
where the wedge sum runs over cyclic words $w$ in $x_1,\ldots,x_n$ and $B^{cy}(\Pi_{a,n})[w]$ is the subspace of $B^{cy}(\Pi_{a,n})$ defined as the geometric realization of the subsimplicial set of $N^{cy}(\Pi_{a,n})$ which in degree $q$ consists of $0$ and $(w_0,\ldots,w_q)$ with $w_0 \cdots w_q=w$ as a cyclic word.
\end{lemma}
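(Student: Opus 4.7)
The plan is to establish the decomposition first at the level of pointed cyclic sets, by partitioning the nonzero simplices of $N^{cy}(\Pi_{a,n})$ according to the cyclic word obtained from their concatenation, and then apply geometric realization. A simplex in $N^{cy}(\Pi_{a,n})_q = \Pi_{a,n}^{\sma(q+1)}$ is either the basepoint or a tuple $(w_0,\ldots,w_q)$ of nonzero words of length strictly less than $a$. To each such nonzero tuple I associate the cyclic equivalence class $[w_0 w_1 \cdots w_q]$ of the abstract concatenation, and I let $N^{cy}(\Pi_{a,n})[w]_q$ consist of the basepoint together with those tuples whose concatenation is cyclically equivalent to $w$. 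The nonzero simplices in degree $q$ partition accordingly, so as pointed sets
\[
 N^{cy}(\Pi_{a,n})_q = \bigvee_w N^{cy}(\Pi_{a,n})[w]_q.
\]

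To promote this to a wedge of pointed cyclic subsets I would verify case by case that every simplicial structure map and the cyclic operator either preserves the cyclic equivalence class of the concatenation or sends the simplex to the basepoint. Degeneracies insert a $1$ and do not alter the concatenation. An inner face $d_i$ with $i<q$ either multiplies $w_i w_{i+1}$ to zero in $\Pi_{a,n}$, when $\ell(w_i)+\ell(w_{i+1}) \geq a$, and hits the basepoint, or concatenates them into a single nonzero word without changing the abstract concatenation. The last face map $d_q(w_0,\ldots,w_q) = (w_q w_0, w_1, \ldots, w_{q-1})$ either collapses to the basepoint when $\ell(w_q)+\ell(w_0) \geq a$ or replaces the concatenation by its cyclic rotation $w_q w_0 w_1 \cdots w_{q-1}$, and likewise $t_q(w_0,\ldots,w_q) = (w_q, w_0, \ldots, w_{q-1})$ performs a cyclic rotation of the concatenation.

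Once these closure properties are checked, each $N^{cy}(\Pi_{a,n})[w]$ is a pointed cyclic subset of $N^{cy}(\Pi_{a,n})$ and the wedge decomposition holds in pointed cyclic sets. Geometric realization of cyclic sets is $S^1$-equivariant and commutes with wedges of pointed objects, so the lemma follows. I do not anticipate any serious obstacle; the conceptual content lies entirely in the observation that the last face map and the cyclic operator only preserve the abstract concatenation up to cyclic rotation, which forces the indexing to be by cyclic words rather than ordinary words and justifies the notation $B^{cy}(\Pi_{a,n})[w]$.
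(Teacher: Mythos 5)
Your argument is exactly the content of the paper's one-line proof, which observes that the splitting "follows formally from the fact that the cyclic structure maps preserve the property of multiplying to $w$ or $0$." You have simply spelled out that verification, so this is the same approach.
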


\begin{proof}
This follows formally from the fact that the cyclic structure maps preserve the property of multiplying to $w$ or $0$.
\end{proof}

The next step is to compute the $S^1$-equivariant homotopy type of $B^{cy}(\Pi_{a,n})[w]$ for each $w$. We find the following.

\begin{lemma}
Let $w$ be a cyclic word of length $s$ in $x_1,\ldots,x_n$, and suppose $w=(w')^e$ with $w'$ an irreducible cyclic word of length $t$. Then $s=et$ and $N^{cy}(\Pi_{a,n})[w]$ is generated as a cyclic set by a single $(s-1)$-simplex with $C_e \subset C_s$ acting trivially. Moreover,
\[
 B^{cy}(\Pi_{a,n})[w] \cong S^1_+ \sma_{C_e} (\Delta^{s-1}/C_s \cdot \Delta^{s-n}).
\]
\end{lemma}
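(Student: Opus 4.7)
The plan is to directly analyze the non-degenerate simplices of the sub-cyclic-set $N^{cy}(\Pi_{a,n})[w] \subset N^{cy}(\Pi_{a,n})$, and then invoke the standard realization principle for a cyclic set generated by a single top simplex with prescribed stabilizer.

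First I would unpack the non-degenerate non-basepoint $q$-simplices. Fix a sequence representative $\tilde w = u_0 u_1 \cdots u_{s-1}$ of the cyclic word $w$; since $w = (w')^e$, this sequence has minimal period $t = s/e$. A non-degenerate non-basepoint $q$-simplex $(w_0, \ldots, w_q) \in N_q^{cy}(\Pi_{a,n})[w]$ is precisely the data of a starting position $i \in \bZ/s$ together with a cyclic partition $s = \ell_0 + \cdots + \ell_q$ with each block length $\ell_j \in [1, a-1]$; here $w_j$ is the length-$\ell_j$ subword of $\tilde w$ starting at position $i + \ell_0 + \cdots + \ell_{j-1}$. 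Setting $q = s-1$ forces each $\ell_j = 1$, so the non-degenerate top simplices are indexed by the starting position $i$, with positions differing by $t$ giving the same simplex by periodicity. This yields exactly $t$ distinct top simplices. The cyclic operator $t_{s-1}$ shifts $i \mapsto i-1$, so $C_s$ acts transitively on the set of top simplices with stabilizer $\langle t_{s-1}^t \rangle \cong C_e$, which proves the first assertion.

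Every non-degenerate $q$-simplex is a face of some top simplex: iterated application of the face maps $d_j$, which merge adjacent blocks $w_j, w_{j+1}$, starting from the all-singleton partition reaches every cyclic partition. Hence $N^{cy}(\Pi_{a,n})[w]$ is generated as a cyclic set by a single $(s-1)$-simplex $x$ with $C_e$-stabilizer. By the standard realization principle for such cyclic sets, there is an $S^1$-equivariant identification
\[
 B^{cy}(\Pi_{a,n})[w] \;\cong\; S^1_+ \sma_{C_e} (\Delta^{s-1}/Z),
\]
where $Z \subset \Delta^{s-1}$ is the $C_s$-invariant subcomplex formed by those faces of $x$ that are sent to the basepoint, and $C_e$ acts on $\Delta^{s-1}$ through the inclusion $C_e \hookrightarrow C_s$ permuting vertices cyclically.

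It remains to identify $Z$. A face of $x = (u_0, \ldots, u_{s-1})$ corresponds to a subset $I \subset \bZ/s$ of retained cut positions, inducing a partition whose block lengths are the cyclic gaps of $I$. The face is sent to the basepoint iff some block has length $\geq a$, equivalently iff $I$ misses some run of $a-1$ consecutive positions in $\bZ/s$. This is exactly the condition that $I$ is contained in some cyclic rotation of the vertex set $\{0, 1, \ldots, s-a\}$ of $\Delta^{s-a}$. Therefore $Z = \bigcup_{i=0}^{s-1} t^i \Delta^{s-a} = C_s \cdot \Delta^{s-a}$, giving the desired formula. The main technical point is the realization principle invoked above: that a pointed cyclic set generated by a single top $(s-1)$-simplex with $C_e$-stabilizer has realization $S^1_+ \sma_{C_e} (\Delta^{s-1}/Z)$. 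This is standard (extending the $e = s$ case implicit in \cite{HeMa97b}), but one has to carefully track how the $C_s$-vertex action on $\Delta^{s-1}$ interacts with the $S^1$-rotation action on the realization; this bookkeeping is where a mistake is easiest and is the one step requiring genuine care.
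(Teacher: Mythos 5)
Your proof is correct and follows essentially the same route as the paper's: the paper simply cites \cite[Lemma 2.2.6]{HeMa97b} and notes that the free cyclic set on an $(s-1)$-simplex realizes to $S^1_+ \sma \Delta^{s-1}_+$, with the two types of relations ($C_e$ acting trivially, words of length $\geq a$ collapsing to the basepoint) giving the two modifications. You have filled in exactly those details — identifying the top simplices as indexed by starting positions modulo the period $t$, computing the stabilizer $C_e$, identifying the collapsed subcomplex $Z$ with $C_s \cdot \Delta^{s-a}$ via the ``missing $a-1$ consecutive cut positions'' criterion, and invoking the realization principle. One small note: the statement as printed reads $\Delta^{s-n}$, which is a typo for $\Delta^{s-a}$ (this is clear from the surrounding context and from Theorem 3.1), and your proof correctly produces $\Delta^{s-a}$.
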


\begin{proof}
This follows formally in the same way as in \cite[Lemma 2.2.6]{HeMa97b}: Let $w=x_{i_0} \cdots x_{i_{s-1}}$. Then $N^{cy}(\Pi_{a,n};w)$ is generated as a cyclic set by the $(s-1)$-simplex $(x_{i_0},\ldots,x_{i_{s-1}})$ plus a disjoint basepoint. The cyclic set freely generated by an $(s-1)$-simplex plus a disjoint basepoint geometrically realizes to $S^1_+ \sma \Delta^{s-1}_+$, and the two types of relations given by $C_e$ acting trivially and words of length $a$ being zero give the two types of modifications.
\end{proof}

The hard part is identifying the $S^1$-equivariant homotopy type of this. Recall that $d= \lfloor \frac{s-1}{a} \rfloor$.

\begin{thm} \label{t:htpytype}
As above let $w$ be a cyclic word of length $s$ with $w=(w')^e$ with $w'$ irreducible.
\begin{enumerate}
\item If $a \nmid s$ the $S^1$-equivariant homotopy type of $B^{cy}(\Pi_{a,n})[w]$ is given by
\[
 B^{cy}(\Pi_{a,n})[w] \simeq S^1_+ \sma_{C_e} S^{V_d}
\]
\item If $a \mid s$ then $B^{cy}(\Pi_{a,n})[w]$ is $S^1$-equivariantly homotopy equivalent to the cofiber of the natural map
\[
 \bigvee_{u \in (v_a^1)^{-1}(w)} S^1_+ \sma_{C_{e'}} S^{V_d} \to S^1_+ \sma_{C_e} S^{V_d}.
\]
Here $e'$ is the natural number such that $u=(u')^{e'}$ in $S_n(a,s)$ with $u'$ irreducible.
\end{enumerate}
\end{thm}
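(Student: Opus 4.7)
The strategy is to combine the preceding lemma, which identifies $B^{cy}(\Pi_{a,n})[w] \cong S^1_+ \sma_{C_e}(\Delta^{s-1}/C_s \cdot \Delta^{s-a})$, with Theorem~\ref{t:thetaequiv}, and then extract the $S^1$-equivariant structure in each of the two cases. Part (1) is immediate: when $a \nmid s$, Theorem~\ref{t:thetaequiv} gives a $C_s$-equivariant equivalence $\Delta^{s-1}/C_s \cdot \Delta^{s-a} \simeq S^{V_d}$, which restricts to a $C_e$-equivariant equivalence, and smashing with $S^1_+$ over $C_e$ yields the claim.

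For Part (2), with $a \mid s$, Theorem~\ref{t:thetaequiv} gives a $C_s$-equivariant equivalence with $C_a * S^{V_d}$, where $C_s$ acts on $C_a$ through the quotient $C_s \twoheadrightarrow C_s/C_{s/a} \cong C_a$. The key observation is that the join $C_a * S^{V_d}$ is built from $S^{V_d}$ by attaching $a$ cones along the identity, and hence fits into the $C_s$-equivariant cofiber sequence
\[
 (C_a)_+ \sma S^{V_d} \xrightarrow{\mathrm{fold}} S^{V_d} \to C_a * S^{V_d}.
\]
Restricting to $C_e \subset C_s$ and applying $S^1_+ \sma_{C_e} (-)$ produces an $S^1$-equivariant cofiber sequence whose middle term is $S^1_+ \sma_{C_e} S^{V_d}$ and whose cofiber is $B^{cy}(\Pi_{a,n})[w]$.

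The remaining task is to identify the leftmost term with the wedge in the statement. I would decompose $C_a$ as a $C_e$-set: the action factors through $C_e \to C_a$ with kernel $C_e \cap C_{s/a}$, so every orbit is a copy of $C_e/C_f$ with $f = \gcd(e, s/a)$ and there are $af/e$ orbits. This yields
\[
 S^1_+ \sma_{C_e}\bigl((C_a)_+ \sma S^{V_d}\bigr) \cong \bigvee_{af/e} S^1_+ \sma_{C_f} S^{V_d},
\]
in which each summand maps into $S^1_+ \sma_{C_e} S^{V_d}$ by the natural map induced from the subgroup inclusion $C_f \subset C_e$. Finally, writing $s = et$ with $w = (w')^e$ and $w'$ irreducible of length $t$, an elementary count of $C_{s/a}$-orbits on cyclic shifts of $w$ shows $|(v_a^1)^{-1}(w)| = \gcd(a,t) = af/e$, and a direct analysis of primitive representatives in $S_n(a,s)$ shows $e' = f$ for each such $u$.

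The main obstacle, I expect, is not the topology --- which is essentially mechanical once Theorem~\ref{t:thetaequiv} and the join-as-cofiber identification are in hand --- but this final combinatorial matching: setting up the natural bijection between $C_e$-orbits on $C_a$ and elements of $(v_a^1)^{-1}(w)$ in such a way that the map in the resulting cofiber sequence is manifestly the one described in the statement.
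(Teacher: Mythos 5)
Your proposal is correct and takes essentially the same route as the paper: reduce to Theorem~\ref{t:thetaequiv}, express $C_a * S^{V_d}$ as the cofiber of the fold map $(C_a)_+ \sma S^{V_d} \to S^{V_d}$, apply $S^1_+ \sma_{C_e}(-)$, and identify the left-hand term as a wedge of $S^1_+ \sma_{C_{e'}} S^{V_d}$'s. You spell out the orbit-counting and the matching of $f = \gcd(e, s/a)$ with $e'$ a bit more explicitly than the paper's proof (which defers those identifications to a remark following the theorem statement), but the argument is the same.
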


It is worth making the second part more explicit. The wedge sum is over words $u$ modulo the equivalence relation $\sim_a$ which map to the cyclic word $w$, and there are $g = \gcd(\ell(w'), a)$ such words. Also, note that the natural number $e'$ is the same for all $u \in (v_a^1)^{-1}(w)$ and is given by $\frac{eg}{a}$.

\begin{proof}[Proof of Theorem \ref{t:htpytype}]
This now follows from Theorem \ref{t:thetaequiv}. The first case is clear. For the second case, we can write $C_a * S^{V_d} \simeq \Delta^{s-1}/C_s \cdot \Delta^{s-a}$ as a cofiber
\[
 (C_a)_+ \sma S^{V_d} \to S^{V_d} \to C_a * S^{V_d} 
\]
and after applying $S^1_+ \sma_{C_e} -$ we get a cofiber sequence
\[
 S^1_+ \sma_{C_e} ((C_a)_+ \sma S^{V_d}) \to S^1_+ \sma_{C_e} S^{V_d} \to S^1_+ \sma_{C_e} (C_a * S^{V_d}).
\]
The space on the left hand side is $S^1$-equivariantly homeomorphic to $\bigvee_g S^1_+ \sma_{C_{e'}} S^{V_d}$, and the result follows.
\end{proof}

\section{$THH$, $\TF$ and $\TC$ of the pointed monoid algebra} \label{s:THHTFTC}
Now we consider topological Hoschild homology of $A=k\langle x_1,\ldots,x_n\rangle/m^a=k(\Pi_{a,n})$. We have an $S^1$-equivariant wedge decomposition
\[
 THH(A,m) \simeq \bigvee_w THH(k) \sma B^{cy}(\Pi_{a,n})[w]
\]
where the wedge is over cyclic words. Because we are considering relative $THH$ we exclude the empty word, it corresponds to the copy of $THH(k)$ which splits off from $THH(A)$ using the splitting $k \to A \to k$.

Recall that we can compute $\TC(A,m)$ from $THH(A,m)$ in two steps. First, compute $\TF(A,m) = \lim_F THH(A,m)^{C_r}$. And second, compute $\TC(A,m)$ as the homotopy equalizer of $R, 1 : \TF(A,m) \to \TF(A,m)$. We start with $\TF(A,m)$.

\begin{prop} \label{p:TF}
We have
\[
 \TF(A,m) \simeq \bigvee_w \TF(A)[w],
\]
where the wedge is over all non-empty cyclic words. Moreover, $\TF(A)[w]$ is given as follows. Let $s$ be the length of $w$, and define $e$ by $w=(w')^e$ with $w'$ irreducible.
\begin{enumerate}
\item If $a$ does not divide $s$ then up to profinite completion we have
\[
 \TF(A)[w] \simeq \Sigma \big(THH(k) \sma S^{V_d}\big)^{C_e}.
\]
\item If $a$ divides $s$ then up to profinite completion $\TF(A)[w]$ is the homotopy cofiber of the map
\[
 \bigvee_{u \in (v_a^1)^{-1}(w)} \Sigma \big(THH(k) \sma S^{V_d}\big)^{C_{e'}} \to \Sigma \big( THH(k) \sma S^{V_d} \big)^{C_e}
\]
given on each wedge summand by the transfer associated to the inclusion of $C_e$-fixed points into $C_{e'}$-fixed points.
\end{enumerate}
\end{prop}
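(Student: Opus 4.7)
The plan is to propagate the $S^1$-equivariant splitting of the preceding lemma through the $\TF$ construction, and then to identify each summand using Theorem~\ref{t:htpytype}. Since cyclic permutation of tensor factors preserves the cyclic word they multiply to, the $S^1$-action (and therefore every $C_r \subset S^1$) preserves each term of $THH(A,m) \simeq \bigvee_w THH(k) \sma B^{cy}(\Pi_{a,n})[w]$. Because $B^{cy}(\Pi_{a,n})[w]$ is a finite pointed $S^1$-CW complex, the corresponding $C_r$-fixed-point spectra are bounded below and of finite type, so after profinite completion the homotopy limit defining $\TF$ distributes over the wedge. This already yields
\[
\TF(A,m) \simeq \bigvee_w \TF(A)[w], \qquad \TF(A)[w] = \holim_F \bigl(THH(k) \sma B^{cy}(\Pi_{a,n})[w]\bigr)^{C_r}.
\]

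To compute each summand I will first invoke the projection formula
\[
THH(k) \sma (S^1_+ \sma_{C_e} X) \simeq S^1_+ \sma_{C_e} (\iota^* THH(k) \sma X),
\]
which reduces the analysis to the fixed-point theory of an induced $S^1$-spectrum. The crucial ingredient is that for any $C_e$-spectrum $Y$ the spectrum $(S^1_+ \sma_{C_e} Y)^{C_r}$ is essentially trivial unless $r \mid e$, while for $r \mid e$ the Wirthm\"uller isomorphism (carrying a one-fold suspension from the tangent representation along $S^1/C_e$), combined with the stabilization of the Frobenius tower at the top group $C_e$, collapses the homotopy limit to $\Sigma Y^{C_e}$ after profinite completion. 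Setting $Y = \iota^* THH(k) \sma S^{V_d}$ and substituting the identification of Theorem~\ref{t:htpytype}(1) directly gives Case 1: $\TF(A)[w] \simeq \Sigma (THH(k) \sma S^{V_d})^{C_e}$.

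For Case 2, I will smash the cofiber sequence of Theorem~\ref{t:htpytype}(2) with $THH(k)$ and then apply $\TF$, obtaining a cofiber sequence whose outer terms are already computed by Case 1 applied to each $S^1_+ \sma_{C_?} S^{V_d}$. The hard part will be identifying the induced connecting map as a wedge of classical transfers from $C_{e'}$-fixed to $C_e$-fixed points. The strategy is to analyze the $C_e$-action on the $C_a$-factor appearing in $(C_a)_+ \sma S^{V_d}$: one verifies that as a $C_e$-space the discrete set $C_a$ breaks into $g = \gcd(\ell(w'),a)$ orbits, each with stabilizer $C_{e'}$, so that $(C_a)_+ \sma S^{V_d}$ is $C_e$-equivariantly induced from $g$ copies of $S^{V_d}$ carrying the $C_{e'}$-action. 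Once this is in place, the natural collapse $(C_a)_+ \to S^0$ translates through the projection formula and Wirthm\"uller isomorphism into the classical transfer $\sum_{[c] \in C_e/C_{e'}} c \cdot (-)$ on $C_e$-fixed points, matching the wedge indexed by $(v_a^1)^{-1}(w)$ in the proposition.
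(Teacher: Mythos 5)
Your proposal reconstructs in detail the argument that the paper delegates entirely to \cite[Section 8]{HeMa97a}: wedge decomposition over cyclic words propagated through the $\holim_F$ via a connectivity/finite-type argument, the projection formula and Adams--Wirthm\"uller identification $\TF(S^1_+ \sma_{C_e} Y) \simeq \Sigma Y^{C_e}$ after profinite completion, and in the $a \mid s$ case the identification of the $C_e$-set $C_a$ as $g$ orbits each with stabilizer $C_{e'}$, turning the collapse $(C_a)_+ \to S^0$ into the transfer. This is exactly the route the paper intends, and the orbit/stabilizer count $g = \gcd(\ell(w'),a)$, $e' = eg/a$ checks out, so the proposal is essentially the same proof made explicit.
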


\begin{proof}
This follows in the same way as in \cite[Section 8]{HeMa97a}.
\end{proof}

We write $d(w)$ for $\lfloor \frac{\ell(w)-1}{a} \rfloor$, and we write $e(w)$ for $|w|_{S_n(1,N)}$. We also write $e'(u)$ for $|w|_{S_n(a,N)}$. As in \cite[Proposition 8.2]{HeMa97a} or \cite[Proposition 4.2.3]{HeMa97b} we then find the following:

\begin{prop} \label{p:TC}
Up to profinite completion $\TC(A,m)$ is the homotopy cofiber of the map
\[
 \underset{u \in S_n(a,\infty)}{\holim} \Sigma \big( THH(k) \sma S^{V_{d(u)}} \big)^{C_{e'(u)}} \to \underset{w \in S_n(1,\infty)}{\holim} \Sigma \big( THH(k) \sma S^{V_{d(w)}} \big)^{C_{e(w)}}
\]
given on each wedge summand by the transfer as in Proposition \ref{p:TF}. Here each homotopy limit is over the restriction maps.
\end{prop}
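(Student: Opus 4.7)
The plan is to compute $\TC(A,m)$ from the description of $\TF(A,m)$ in Proposition \ref{p:TF} by tracking how the restriction maps $R$ interact with the wedge decomposition, and then rewriting the resulting homotopy equalizer as a cofiber between two homotopy limits indexed by the truncation posets $S_n(a,\infty)$ and $S_n(1,\infty)$.

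First I would analyze the action of $R$ on the summands $\TF(A)[w]$. The cyclotomic structure on $THH$ is compatible with the wedge decomposition $B^{cy}(\Pi_{a,n}) \simeq \bigvee_w B^{cy}(\Pi_{a,n})[w]$: restriction from $C_{pn}$-fixed points to $C_n$-fixed points sends the summand indexed by a cyclic word $w=(w'')^p$ to the summand indexed by $w''$. Consequently, for each irreducible cyclic word $w'$ the subsystem $\{\TF(A)[(w')^e]\}_{e \geq 1}$ forms a tower under $R$, and the indexing poset for this tower is precisely $S_n(1,\infty)[w'] \cong \bN$ from the Lemma in Section \ref{s:genWitt}. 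Assembling these towers as $w'$ varies identifies $\TF(A,m)$ with an $R$-compatible system indexed by $S_n(1,\infty)$.

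Second, I would feed in the description of each $\TF(A)[w]$ from Proposition \ref{p:TF}. When $a \nmid \ell(w)$ the summand is a single suspended fixed-point spectrum, while when $a \mid \ell(w)$ it is the cofiber of a transfer whose source is a wedge indexed by the fiber $(v_a^1)^{-1}(w)$. These combine into a single cofiber sequence of $R$-compatible systems whose source is naturally indexed by $S_n(a,\infty)$ (via $v_a^1$) and whose target is indexed by $S_n(1,\infty)$. Taking the homotopy equalizer of $R$ and $1$ preserves cofiber sequences, so $\TC(A,m)$ is the cofiber of the induced map between the two homotopy equalizers. A standard reindexing argument, using that for each irreducible $w'$ the tower under $R$ is equivalent to the restriction system over $\bN$, identifies each equalizer with the corresponding $\holim$ over the truncation poset.

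The main obstacle is the bookkeeping to verify that the transfer maps from Proposition \ref{p:TF}(2), when assembled over all $w$, are genuinely compatible with restriction, so that they descend to a well-defined map of homotopy limits $\holim_{S_n(a,\infty)} \to \holim_{S_n(1,\infty)}$. Concretely, for $u \in S_n(a,\infty)$ one must check that the transfer $\Sigma(THH(k) \sma S^{V_{d(u)}})^{C_{e'(u)}} \to \Sigma(THH(k) \sma S^{V_{d(u)}})^{C_{e(v_a^1(u))}}$ commutes up to homotopy with the restriction maps at both levels. This parallels the argument in \cite[Section 8]{HeMa97a} and \cite[Proposition 4.2.3]{HeMa97b}, but the presence of non-commuting variables forces us to keep track of the truncation poset structure (including the fact that $v_a^1$ is neither injective nor surjective) rather than a linear tower indexed by $\bN$.
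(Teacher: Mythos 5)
Your argument is correct and follows the same route the paper takes by reference to \cite[Proposition 8.2]{HeMa97a} and \cite[Proposition 4.2.3]{HeMa97b}: the cyclotomic restriction $R$ shifts the wedge summand indexed by $w=(w'')^p$ to the one indexed by $w''$, so the homotopy equalizer of $R$ and $1$ on the wedge decomposition of $\TF(A,m)$ reassembles into homotopy limits over $S_n(1,\infty)$ and (via $v_a^1$) $S_n(a,\infty)$, and since the homotopy equalizer is a fiber in spectra it commutes with the cofiber sequence from Proposition \ref{p:TF}. The compatibility of the transfers with the restriction maps, which you correctly flag as the nontrivial bookkeeping, likewise goes through as in those references.
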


Note that we have not yet used that $k$ is a perfect field of positive characteristic; this characterization of $\TC(A,m)$ is valid for any commutative ring $k$.

Because the kernel of $A \to k$ is nilpotent, we can use the standard comparison results from \cite{Mc97, Du97, DuGoMc13} to conclude that $K(A,m)$ is given by the same cofiber. Now we can prove Theorem \ref{t:main}.

\begin{proof}[Proof of Theorem \ref{t:main}]
Suppose $k$ is a perfect field of positive characteristic. It suffices to identify the map in Proposition \ref{p:TC} with the middle map in Theorem \ref{t:main}.

Consider the group
\[
 \pi_{2q-1} \Sigma \big( THH(k) \sma S^{V_{d(w)}} \big)^{C_{e(w)}} \cong \pi_{2q-2} \big( THH(k) \sma S^{V_{d(w)}} \big)^{C_{e(w)}}.
\]
If $d(w) \leq q-1$ then this is given by $\bW_{\langle e \rangle}(k)$, otherwise it is the limit (using the restriction map $R$) over $w'$ with $w' \mid w$ of $\pi_{2q-2} \big( THH(k) \sma S^{V_{d(w')}} \big)^{C_{e(w')}}$. Putting all of that together, $\pi_{2q-1}$ of the target of the map in Proposition \ref{p:TC} is given by
\[
 \pi_{2q-1} \underset{w \in S_n(1,\infty)}{\holim} \Sigma \big( THH(k) \sma S^{V_{d(w)}} \big)^{C_{e(w)}} \cong \bW_{S_n(1,aq)}(k).
\]
The same reasoning applies to the source of the map, and we find that
\[
 \pi_{2q-1} \underset{u \in S_n(a,\infty)}{\holim} \Sigma \big( THH(k) \sma S^{V_{d(u)}} \big)^{C_{e'(u)}} \cong \bW_{S_n(a,aq)}(k).
\]
Finally, the transfer map turns into the Verschiebung in the same way as in \cite[Proposition 9.1]{HeMa97a}.
\end{proof}

\section{Calculations for $\bZ$} \label{s:integers}
For this section let $A=\bZ \langle x_1,\ldots,x_n \rangle/(m^a)$. To compute the rationalized $K$-groups of $A$ we follow the strategy used to prove \cite[Theorem 1.4]{AGHL14}. We start with one more definition.

\begin{defn}
Let $S_n(a,[M,N]) \subset S_n(a,N)$ denote the set of words of length between $M$ and $N$ (inclusive), with length divisible by $a$, modulo the relation given by cyclically permuting blocks of $a$ letters. Let $k\{S_n(a,[M,N])\}$ denote the free $k$-module on the set $S_n(a,[M,N])$.
\end{defn}

\begin{thm}
There is an exact sequence
\begin{multline*}
 0 \to K_{2q}(A,m)_\bQ \to \bQ\{S_n(a,[a(q-1)+1,aq])\} \\
 \xto{V_a^1} \bQ\{S_n(1,[a(q-1)+1,aq])\} \to K_{2q-1}(A,m)_\bQ \to 0.
\end{multline*}
Moreover, the map $V_a^1$ surjects onto $\bQ\{S_n(1,[aq,aq])\}$, so
\[
 K_{2q-1}(A,m)_\bQ \cong \bQ\{S_n(1,[a(q-1)+1,aq-1])\}.
\]
\end{thm}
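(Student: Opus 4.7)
The plan is to run the strategy of the proof of Theorem \ref{t:main} rationally with $k = \bZ$, mimicking the approach in \cite{AGHL14}. Proposition \ref{p:TC} expresses $\TC(A, m)$ as the cofiber of an explicit map for any commutative $k$, and by \cite{DuGoMc13} this also computes $K(A, m)$ without profinite completion. The crucial rational input is that $THH(\bZ)_\bQ \simeq H\bQ$ is concentrated in degree zero, so the geometric fixed points $\Phi^{C_r}THH(\bZ)_\bQ \simeq H\bQ$ are also easy to control.

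First I would compute $\pi_*\bigl(\Sigma(THH(\bZ) \sma S^{V_d})^{C_e}\bigr)_\bQ$ using the tom Dieck splitting together with the rational equivalence $BW_+ \simeq_\bQ S^0$ for any finite group $W$. Since $W = C_e/C_r$ acts by orientation-preserving rotations on the even-dimensional sphere $S^{V_d^{C_r}}$, the rational homotopy orbits agree with the underlying rational homotopy and one obtains
\[
 \pi_n\bigl(\Sigma(THH(\bZ) \sma S^{V_d})^{C_e}\bigr)_\bQ \cong \bigoplus_{r \mid e,\ 2\lfloor d/r\rfloor + 1 = n} \bQ,
\]
which in particular vanishes in even degrees. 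For $w = (w')^{e(w)}$ with $w'$ irreducible, a direct arithmetic check shows that the divisors $r \mid e(w)$ with $\lfloor d(w)/r\rfloor = q - 1$ correspond bijectively to subwords $u = w^{1/r}$ of $w$ with $\ell(u) \in [a(q-1)+1, aq]$.

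Second I would compute the rational homotopy limit over $S_n(1, \infty)$. Under restriction, the tom Dieck summand labelled $u$ at level $w$ maps identically to the summand labelled $u$ at level $w'$ whenever $u \mid w' \mid w$, and to zero otherwise; the higher derived limits vanish rationally because each system in a fixed degree is eventually constant along every chain. The compatible families in the inverse limit are therefore in bijection with elements of $S_n(1, [a(q-1)+1, aq])$, yielding
\[
 \pi_{2q-1}\bigl(\underset{w \in S_n(1,\infty)}{\holim}\, \Sigma(THH(\bZ) \sma S^{V_{d(w)}})^{C_{e(w)}}\bigr)_\bQ \cong \bQ\{S_n(1, [a(q-1)+1, aq])\},
\]
and analogously $\bQ\{S_n(a, [a(q-1)+1, aq])\}$ for the source of the map in Proposition \ref{p:TC}. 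The transfer is identified with the generalized Verschiebung $V_a^1$ exactly as in the last step of the proof of Theorem \ref{t:main}, via \cite{HeMa97a}, Proposition 9.1. Because the rational homotopy groups vanish in even degrees, the cofiber long exact sequence collapses to the stated four-term exact sequence.

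For the surjectivity claim, note that the only length in $[a(q-1)+1, aq]$ divisible by $a$ is $aq$ itself, so $S_n(a, [a(q-1)+1, aq]) = S_n(a, [aq, aq])$. Given any $w \in S_n(1, [aq, aq])$, lift a length-$aq$ word representative and pass to its $\sim_a$ class to produce $u \in S_n(a, [aq, aq])$ with $v_a^1(u) = w$. The ghost-coordinate formula of Section \ref{s:genWitt} gives $V_a^1[u] = (|w|/|u|)\,[w]$, a non-zero rational multiple of $[w]$, so $\bQ\{S_n(1, [aq, aq])\}$ lies in the image of $V_a^1$, and the cokernel reduces to $\bQ\{S_n(1, [a(q-1)+1, aq-1])\}$. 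I expect the main obstacle to lie in the second step, namely verifying that the restriction maps in the TR tower respect the tom Dieck decomposition rationally and collapse the system to the claimed free $\bQ$-module on truncation poset elements; this is the rational analogue of the careful tower analysis carried out in \cite{HeMa97a}, Section 8.
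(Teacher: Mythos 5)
Your proposal is correct and follows essentially the same route as the paper. The paper's proof is a two-sentence citation: it quotes the fact that $\big(\pi_*(THH(\bZ)\sma S^{\lambda_d})^{C_e}\big)_\bQ$ has a $\bQ$ in dimension $2\dim_\bC(\lambda_d^{C_{e'}})$ for each $e'\mid e$, and then appeals to a ``minor modification'' of \cite[Proposition 6.1]{AGHL14}; you have reconstructed precisely that modification. Two small points of precision: what you call the ``tom Dieck splitting'' is really the rational splitting of the isotropy separation filtration of $(THH(\bZ)\sma S^{V_d})^{C_e}$ into homotopy orbits of geometric fixed points (tom Dieck's theorem is for suspension spectra, but the rational statement holds in this generality because rational Tate constructions for finite groups vanish); and the $\lim^1$-vanishing is better justified by noting that along each chain $\{(w')^e\}$ the restriction maps on $\pi_{2q-1}(-)_\bQ$ become surjections, so the system is Mittag-Leffler, rather than by claiming eventual constancy. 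Neither affects the conclusion, and the arithmetic identifying divisors $r\mid e(w)$ with $\lfloor d(w)/r\rfloor=q-1$ with subwords of length in $[a(q-1)+1,aq]$, as well as the surjectivity onto $\bQ\{S_n(1,[aq,aq])\}$ via the ghost formula, are both correct.
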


Here of course $S_n(a,[a(q-1)+1,aq])=S_n(a,[aq,aq])$. It is also possible to describe $K_{2q}(A,m)_\bQ$ explicitly; it is the direct sum of copies of $\bQ$ with $g-1$ summands for each cyclic word $w$ of length $aq$, where $g=|(v_a^1)^{-1}(w)|$.

\begin{proof}
As in \cite{AGHL14}, we use that $\big( \pi_* (THH(\bZ) \sma S^{\lambda_d})^{C_e} \big)_\bQ$ has a $\bQ$ in dimension $2 \dim_\bC (\lambda_d^{C_{e'}})$ for each $e' \mid e$. Then a minor modification of the proof of \cite[Proposition 6.1]{AGHL14} shows that the homotopy cofiber in Proposition \ref{p:TC} becomes the exact sequence in the theorem.
\end{proof}

\bibliographystyle{plain}
\bibliography{b.bib}

\end{document}